\theoremstyle{plain}                                                           
\newtheorem{thm}{Theorem}[section]
\newtheorem*{thma}{Theorem \ref{aa}}
\newtheorem*{thmb}{Theorem \ref{bb}}
\newtheorem{speculation}[thm]{Speculation}
\newtheorem{lem}[thm]{Lemma}
\newtheorem{prop}[thm]{Proposition}
\newtheorem{cor}[thm]{Corollary}
\newtheorem{conjecture}[thm]{Conjecture}
\theoremstyle{definition}
\newtheorem{rem}[thm]{Remark}
\newcommand{\Eis}{\mathrm{Eis}}
\DeclareMathOperator{\Sym}{Sym}
\DeclareMathOperator{\Sp}{Sp}
\DeclareMathOperator{\SL}{SL}
\DeclareMathOperator{\Frob}{Frob}
\newcommand{\Gal}{\mathsf{Gal}}
\newcommand{\field}[1]{\ensuremath{\mathbf{#1}}}
\newcommand{\Q}{\ensuremath{\field{Q}}}        
\newcommand{\sym}{\ensuremath{\mathbb{S}}}
\newcommand{\Z}{\ensuremath{\field{Z}}}
\newcommand{\U}{\mathbb{U}_0^-}
\newcommand{\FM}{\mathsf{FM}}
\newcommand{\M}{\mathcal{M}}
\newcommand{\C}{\mathcal{C}}
\newcommand{\D}{\mathcal{D}}
\newcommand{\F}{\mathbb{W}}
\newcommand{\A}{\mathcal{A}}
\newcommand{\MM}{\overline{\mathcal{M}}}
\newcommand{\gr}{\mathfrak{gr}}
\newcommand{\V}{\mathbb{V}}
\newcommand{\X}{\mathcal{X}}
\newcommand{\R}{\mathrm{R}}       
\newcommand{\ct}{\mathit{ct}}
\newcommand{\rt}{\mathit{rt}}
\title{The Gorenstein conjecture fails for the tautological ring of $\MM_{2,n}$}
\author{Dan Petersen}
\email{danpete@math.kth.se}
\address{Department of Mathematics \\ KTH Royal Institute of Technology \\ 100 44 Stockholm \\ Sweden}
\thanks{The first named author is supported by the G\"oran Gustafsson foundation for scientific and medical research. The second named author is supported by DFG under grant Hu337/6-2.}
\author{Orsola Tommasi}
\email{tommasi@math.uni-hannover.de}
\address{Institut für Algebraische Geometrie \\
Leibniz Universität Hannover \\
Welfengarten 1 \\
D-30167 Hannover \\
Germany}
\begin{document} 
  
 \maketitle

\begin{abstract} Let $N$ be the smallest integer such that there is a non-tautological cohomology class of even degree on $\MM_{2,N}$. We remark that there is such a non-tautological class on $\MM_{2,20}$, by work of Graber and Pandharipande. We show that $\MM_{2,N}$ has non-tautological cohomology only in one degree, which is \emph{not} the middle degree. In particular, it follows that the tautological ring $R^\bullet(\MM_{2,N})$ is not Gorenstein. We present some evidence suggesting that $N=20$ holds. \end{abstract}

\section{Introduction} 

The tautological ring of $\M_g$ was introduced in \cite{mumfordtowards} as a natural subring $R^\bullet(\M_g)$ of the rational Chow ring containing most ``geometrically meaningful'' classes. Mumford originally defined it as the subalgebra generated by the $\kappa$-classes. By working with moduli of stable curves with marked points, one can give a more general and economical definition, as proposed in \cite{relativemaps}: the system of all tautological rings $\{R^\bullet(\MM_{g,n})\}$ is the smallest collection of $\Q$-subalgebras of  $\{A^\bullet(\MM_{g,n})\}$ closed under pushforward along gluing and forgetful morphisms. If $U \subset \MM_{g,n}$ is Zariski open, one defines the tautological ring of $U$ to be the image of the natural restriction map. Finally, one also defines the tautological cohomology ring $RH^\bullet(\MM_{g,n})$ to be the image of the ordinary tautological ring in cohomology. 

Based on empirical observations, Faber \cite{faberconjectures} formulated an ambitious conjecture giving a precise description of the structure of the tautological ring of $\M_g$. Most of these conjectures have by now been proven by work of a large amount of people, see \cite{pcmifaber} for a survey. But the most elusive part of the conjecture has turned out to be the ``Gorenstein'' part of the conjecture, which asserts that $R^\bullet(\M_g)$ is a Gorenstein ring with socle in dimension $g-2$ (in other words, that the ring satisfies Poincar\'e duality).  Analogous Gorenstein conjectures have been formulated also for the spaces $\MM_{g,n}$ (stable $n$-pointed curves), $\M_{g,n}^\ct$ (curves of compact type) and $\M_{g,n}^\rt$ (curves with rational tails), see \cite{faberjapan,pandharipandequestions}.  It is known for all these spaces that the tautological ring vanishes above the expected degree and that its top degree is one-dimensional, by the work of \cite{looijengatautological, logarithmicseries,gv1,gv2}.
  
 However, there is by now some evidence suggesting that the Gorenstein property may be false in general. No known method of constructing relations between tautological classes will at this point produce enough relations between the generators of $R^\bullet(\M_{24})$ to yield a Gorenstein ring, as discussed in \cite[Lecture 1]{pcmifaber}. In particular, the conjecture that the Faber--Zagier relations give rise to all relations in the tautological ring \cite{pandapixton} implies that $R^\bullet(\M_{24})$ is not Gorenstein. Yin \cite{yin} finds evidence that $R^\bullet(\M_{20,1})$ is not Gorenstein. Some weaker circumstantial evidence is provided in \cite{cavalieriyang}. 
 
 In this paper we find the first ``smoking gun'', by proving that the tautological ring of $\MM_{2,n}$ is not always Gorenstein. Our first observation is that if the tautological ring is Gorenstein, then the cycle class map to the tautological cohomology ring is an isomorphism onto its image, and so the tautological cohomology ring is also Gorenstein. Indeed, if an algebra is Gorenstein, then the socle is mapped to zero in any nontrivial quotient of it. But the socle is mapped injectively under the cycle class map.

 We therefore study the tautological cohomology ring $RH^\bullet(\MM_{2,n})$; it suffices to prove that this ring is not Gorenstein. Since the whole cohomology ring $H^\bullet(\MM_{2,n})$ satisfies Poincar\'e duality, we see that in order for the tautological cohomology ring to be Gorenstein, any non-tautological cohomology class below the middle degree needs to be ``paired'' with another non-tautological cohomology class above the middle degree.  We prove that if $N$ is the smallest integer for which $\MM_{2,N}$ has a non-tautological cohomology class in even degree, then this class occurs in degree $2+N$ (so below the middle), and that no non-tautological class occurs in any other even degree. By \cite{graberpandharipande} we know that $N\leq 20$, and as we explain in the paper, we speculate that $N=20$ holds. We make use of recent work of Harder \cite{harder}, who has completely determined the Eisenstein cohomology of every local system on $\A_2$. 
  
In the process, we show that every cohomology class on $\MM_{2,20}$ of even degree which is pushed forward from the boundary is tautological, even though there is non-tautological cohomology in even degree on $\MM_{1,11}\times \MM_{1,11}$. This answers a question posed in \cite{graberpandharipande}. 

\subsection{Conventions} In Sections \ref{outline}, \ref{monodromy} and \ref{pc}, all cohomology means \'etale cohomology with $\Q_\ell$-coefficients, for some fixed prime $\ell$. The only properties we really use is the existence of a weight filtration satisfying the usual properties; our primary reason for working with \'etale cohomology is easy compatibility with the work of Harder. In Section \ref{localsystems} we shall have reason to switch between cohomology theories and we try to be more careful with coefficients. 


\section{An outline of the argument and the main theorems}
\label{outline}
Let $\M_{2,n}^\rt$ denote the moduli space of stable $n$-pointed curves of genus $2$ with \emph{rational tails}, i.e.\ which have an irreducible component of geometric genus $2$. Let $\widetilde{\partial \M_{2,n}^\rt}$ denote the normalization of the boundary of $\M_{2,n}^\rt$, so that every connected component of $\widetilde{\partial \M_{2,n}^\rt}$ is isomorphic to either $\MM_{1,n'}\times \MM_{1,n-n'}$ or $\MM_{1,n+2} / \sym_2$. Since $\widetilde{\partial \M_{2,n}^\rt}$ is a resolution of singularities of the boundary (in the sense of stacks), there is an exact sequence
\[ \label{fund} H^{k-2} (\widetilde{\partial \M_{2,n}^\rt})(-1) \stackrel {q_!}\to H^k(\MM_{2,n}) \to W_k H^k(\M_{2,n}^\rt) \to 0, \tag{$\partial$}\]
as one sees by combining \cite[Corollaire 8.2.8]{hodge3} and \cite[Corollaire 3.2.17]{hodge2}. (Deligne's arguments use only formal properties of the weight filtration.) 

\begin{lem}\label{lem1}Suppose that  all classes in the image of $q_!$ are tautological, and that all classes in $W_k H^k (\M_{2,n}^\rt)$ are tautological. Then all of $H^k (\MM_{2,n})$ is tautological. \end{lem}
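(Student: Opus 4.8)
The plan is to read off the conclusion directly from the exact sequence \eqref{fund}. That sequence tells us that $H^k(\MM_{2,n})$ is built from two pieces: the image of $q_!$ coming from the boundary, and the quotient $W_k H^k(\M_{2,n}^\rt)$, which is the lowest weight part of the cohomology of the open stratum. So the strategy is simply to show that under the two stated hypotheses, every class in $H^k(\MM_{2,n})$ lifts to, or is accounted for by, tautological classes.

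First I would take an arbitrary class $\alpha \in H^k(\MM_{2,n})$ and consider its image $\bar\alpha$ in $W_k H^k(\M_{2,n}^\rt)$ under the restriction map. By the second hypothesis, $\bar\alpha$ is tautological, meaning it lies in the image of the restriction of $RH^\bullet(\MM_{2,n})$ to $\M_{2,n}^\rt$. Hence there is a tautological class $\tau \in RH^k(\MM_{2,n}) \subseteq H^k(\MM_{2,n})$ whose restriction to $\M_{2,n}^\rt$ equals $\bar\alpha$. The difference $\alpha - \tau$ then restricts to zero in $W_k H^k(\M_{2,n}^\rt)$, hence by exactness of \eqref{fund} lies in the image of $q_!$. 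By the first hypothesis, every class in the image of $q_!$ is tautological, so $\alpha - \tau$ is tautological. Therefore $\alpha = (\alpha - \tau) + \tau$ is a sum of two tautological classes, hence tautological.

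The one point requiring a word of care — and the only place where there is anything to check — is the claim that a tautological class on $\M_{2,n}^\rt$ in the sense used in the second hypothesis actually comes by restriction from a tautological class on the compactification $\MM_{2,n}$. This is exactly the definition recalled in the introduction: the tautological ring of a Zariski open $U \subseteq \MM_{g,n}$ is defined to be the image of the restriction map from $R^\bullet(\MM_{g,n})$, and likewise $RH^\bullet$ of an open set is the image of $RH^\bullet(\MM_{g,n})$ in its cohomology. Since $\M_{2,n}^\rt$ is open in $\MM_{2,n}$, the hypothesis "all classes in $W_k H^k(\M_{2,n}^\rt)$ are tautological" means precisely that $W_k H^k(\M_{2,n}^\rt)$ lies in the image of $RH^k(\MM_{2,n})$, so the lift $\tau$ exists by definition. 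Thus there is really no obstacle; the lemma is a formal consequence of \eqref{fund} together with the definition of the tautological ring of an open subset.
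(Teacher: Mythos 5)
Your proof is correct and is exactly the argument the paper has in mind; the paper's own proof consists of the single word ``Clear,'' and your write-up is the careful unpacking of that claim via exactness of $(\partial)$ and the definition of the tautological ring of an open subset as the image of restriction from the compactification.
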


\begin{proof}Clear. \end{proof}

By the definition of tautological ring, $q_!$ maps tautological classes to tautological classes.  By the first author's recent proof \cite{genusone} of the claims of \cite{ellipticgw}, we have a good understanding of precisely what classes in the domain of $q_!$ are tautological. (The second half of the following theorem is present already in \cite{getzler99, semiclassical}.)

\begin{thm}[Petersen, Getzler] All even cohomology classes on $\MM_{1,n}$ are tautological. All odd cohomology classes are Tate twists of Galois representations attached to cusp forms for $\SL_2(\Z)$, and the first example of this is the nonzero cohomology group $H^{11}(\MM_{1,11})$. \end{thm}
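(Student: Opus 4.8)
The plan is to reduce the statement to a computation over $\M_{1,1}$, propagated through the boundary by induction on $n$. First I would treat the open part $\M_{1,n}$ via the fibration $\pi\colon \M_{1,n}\to\M_{1,1}$ whose fibres are configuration spaces of points on a punctured elliptic curve. The cohomology of these fibres, as a representation of $\SL_2(\Z)$, is a direct sum of Tate twists of the local systems $\V_a$ ($a$-th symmetric power of the standard weight-one local system) with $a\le n-1$; I would use the Leray spectral sequence $E_2^{p,q}=H^p(\M_{1,1},R^q\pi_*\Q)$. Since $\M_{1,1}$ is a smooth affine stacky curve, only $p=0,1$ survive: the $p=0$ column gives $H^0(\M_{1,1},\V_a)=0$ for $a>0$, hence only Tate classes, and by Eichler--Shimura the $p=1$ column has $H^1(\M_{1,1},\V_a)=0$ for $a$ odd, while for $a$ even it is a one-dimensional Eisenstein piece of Hodge--Tate type together with a two-dimensional motive for each normalized cusp eigenform of weight $a+2$ for $\SL_2(\Z)$. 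As there is no such cusp form of weight below $12$, the $p=1$ column is purely Eisenstein (hence Tate) until $a=10$, where $\Delta$ first appears; since $\V_{10}$ enters $R^q\pi_*\Q$ only once $n-1\ge 10$ and then in an even fibre degree, the first cuspidal contribution lands in $H^{11}(\M_{1,11})$.

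The second step is to pin down parities and degeneration. Each $\V_a$ occupies fibre degrees of a fixed parity, so the surviving $H^1$-terms contribute only to \emph{odd} total degree, whereas the $H^0$-terms and the Tate Eisenstein terms contribute only to \emph{even} total degree. I would then argue that no differential in the Leray spectral sequence can hit or kill a cuspidal class and that no cuspidal class can cancel against a Tate class, because the spectral sequence is one of mixed Hodge (resp.\ Galois) structures and cuspidal motives are not of Tate type, so all maps are strict. This identifies $W_k H^k(\M_{1,n})$ as an extension of tautological Tate-type classes in even degree by Tate twists of cusp-form motives in odd degree.

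Next I would pass from $\M_{1,n}$ to $\MM_{1,n}$ by induction on $n$, using an excision (Gysin) sequence of the same shape as the one employed above for $\M_{2,n}^\rt$, or equivalently the spectral sequence of the stratification of $\MM_{1,n}$ by topological type. Every boundary stratum is, up to finite quotients, a product of one factor of the form $\M_{1,n'}$ (or $\MM_{1,n'}$) with $n'<n$ and several factors $\overline{M}_{0,m}$; the latter have only tautological even cohomology and vanishing odd cohomology, and the former falls under the inductive hypothesis. Since pushforward along gluing maps carries tautological classes to tautological classes by definition, the part of $H^\bullet(\MM_{1,n})$ supported on the boundary is tautological in even degree and a sum of Tate twists of cusp-form motives in odd degree; combining this with the analysis of the open stratum closes the induction, and in particular shows $H^{\mathrm{odd}}(\MM_{1,n})=0$ for $n\le 10$ while $H^{11}(\MM_{1,11})$ is the $\Delta$-motive.

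The main obstacle is not the spectral-sequence bookkeeping but showing that the Eisenstein (Tate-type) classes that appear --- both those coming from $H^0(\M_{1,1},\Q)$-rows and those forming the Eisenstein part of the $H^1$-rows --- are genuinely \emph{tautological}, i.e.\ lie in the subring generated by the standard tautological classes. For this one must exhibit explicit tautological cycles restricting to each Eisenstein class; this is exactly the content of the first author's work \cite{genusone} proving the claims of \cite{ellipticgw}, together with \cite{getzler99, semiclassical} for the cuspidal description. A secondary subtlety is ensuring that the restriction maps $H^k(\MM_{1,n})\to W_k H^k(\M_{1,n})$ are surjective onto the tautological part and that the inductive step is compatible with the forgetful and gluing operations that generate the tautological rings.
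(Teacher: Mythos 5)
This theorem is not actually proved in the paper: it is cited from \cite{genusone} (with the second half also attributed to \cite{getzler99,semiclassical}), and the paper only sketches the strategy (``reduce to $W_kH^k(\M_{1,n})$ via a boundary/interior argument, then fiber $\M_{1,n}$ over $\M_{1,1}$ and apply Eichler--Shimura''). Your outline matches that sketch exactly: the Leray spectral sequence for $\M_{1,n}\to\M_{1,1}$ with its two surviving columns, Eichler--Shimura giving the cuspidal pieces with first occurrence at $\V_{10}$ and hence in $H^{11}(\M_{1,11})$, the parity bookkeeping of $\V_a$ within $R^q\pi_*\Q$, and the induction over boundary strata to pass from $\M_{1,n}$ to $\MM_{1,n}$. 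You also correctly flag the genuine content of the cited work: the nontrivial step is not the spectral-sequence count but proving that the Tate/Eisenstein classes that arise on the open stratum and from the boundary are actually \emph{tautological} and not merely of Tate type, together with the compatibility of the boundary long exact sequence with tautological pushforward; that is precisely what \cite{genusone} supplies. One small simplification: degeneration of the Leray spectral sequence at $E_2$ is automatic here because $\M_{1,1}$ is an affine curve (only two nonzero columns), so no strictness-of-weights argument is needed for that step; strictness is still useful for disentangling Tate from cuspidal summands. As an outline deferring to \cite{genusone}, the proposal is essentially the paper's own route.
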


The argument used in \cite{genusone} was to apply a similar reduction as the one used in this section to reduce to the statement that $W_k H^k (\M_{1,n})$ is tautological when $k$ is even, or consists of cusp form classes when $k$ is odd. One then studies $H^k (\M_{1,n})$ and its weight filtration by fibering $\M_{1,n}$ over $\M_{1,1}$ and applying the Eichler--Shimura theory, which expresses the cohomology of local systems on $\M_{1,1}$ in terms of Galois representations attached to modular forms. 

\begin{cor}\label{cor1}Let $n < 20$, and suppose $k$ is even. Then $H^{k} (\widetilde{\partial \M_{2,n}^\rt})$ consists only of tautological classes. When $n=20$, this is false for $k=22$ and the cohomology of the connected components of the form $\MM_{1,11}\times \MM_{1,11}$. \end{cor}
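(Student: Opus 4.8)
The plan is to read off the even-degree cohomology of each connected component of $\widetilde{\partial \M_{2,n}^\rt}$ from the Petersen--Getzler theorem via the K\"unneth formula, and then to pin down the one place where a non-tautological class appears. A connected component is either a product $\MM_{1,a}\times\MM_{1,b}$ of two moduli of pointed elliptic curves --- with $a,b\ge 1$ and $a+b=n+2$, the $n$ marked points being distributed among the two factors, each of which in addition carries one branch of the node --- or a quotient $\MM_{1,n+2}/\sym_2$. The two genus-one facts I would feed in are: (i) $H^{\mathrm{even}}(\MM_{1,m})$ is tautological for every $m$; and (ii) $H^{\mathrm{odd}}(\MM_{1,m})=0$ for $m\le 10$, and $H^{\mathrm{odd}}(\MM_{1,11})$ is concentrated in degree $11$ --- the vanishing for $m\le 10$ and below degree $11$ is part of the cited theorem, and the vanishing above degree $11$ on $\MM_{1,11}$ then follows by Poincar\'e duality on that smooth proper stack of dimension $11$.

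Consider first a product component $\MM_{1,a}\times\MM_{1,b}$ and an even $k$. Each term $H^i(\MM_{1,a})\otimes H^j(\MM_{1,b})$ of the K\"unneth decomposition of $H^k$ has $i\equiv j\pmod{2}$. If $i$ and $j$ are both even then, by (i), this term is a sum of exterior products of tautological classes, hence tautological on the product (by definition the tautological ring of a product is the image of the exterior product of the tautological rings of the factors). A term with $i$ and $j$ both odd is nonzero only if $a\ge 11$ and $b\ge 11$, hence only if $n+2=a+b\ge 22$. Thus for $n<20$ every even-degree class on every product component is tautological, and for $n=20$ the only product component that can contain an odd--odd term is $\MM_{1,11}\times\MM_{1,11}$, where by (ii) such a term occurs only in degree $k=22$, namely as the summand $H^{11}(\MM_{1,11})\otimes H^{11}(\MM_{1,11})$, which is complementary to the (even--even, hence tautological) remaining summands of $H^{22}$. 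For a quotient component, $H^\bullet(\MM_{1,n+2}/\sym_2;\Q)$ is the $\sym_2$-invariant part of $H^\bullet(\MM_{1,n+2};\Q)$, so its even part lies in $RH^{\mathrm{even}}(\MM_{1,n+2})$ and is therefore tautological; quotient components contribute no non-tautological even class. Assembling these cases gives the corollary.

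It remains to verify that $H^{11}(\MM_{1,11})\otimes H^{11}(\MM_{1,11})$ really is non-tautological, so that the statement for $n=20$ is sharp. Tautological cohomology classes are algebraic, hence of Tate type; but $H^{11}(\MM_{1,11})$ is a Tate twist of the two-dimensional motive $M$ attached to the weight-$12$ cusp form $\Delta$, and $M\otimes M\cong \Sym^2 M\oplus \Q_\ell(-11)$, whose summand $\Sym^2 M$ is three-dimensional and not of Tate type (over $\field{C}$ this is visible from the nonzero $(22,0)$-part of the associated Hodge structure). I do not expect a single hard step here: the actual work is bookkeeping --- fixing the precise meaning of ``tautological'' on the product and quotient stacks, checking that K\"unneth and taking invariants respect it, and invoking the standard fact that the $\Delta$-motive and its symmetric square are not of Tate type --- while everything substantive is an immediate consequence of the Petersen--Getzler theorem.
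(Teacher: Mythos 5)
Your proof is correct and fills in the argument the paper leaves implicit (the paper states this corollary with no proof, treating it as an immediate consequence of the Petersen--Getzler theorem via K\"unneth). You correctly track the marked points: each factor of a product component carries one extra point for the branch of the node, so the components are $\MM_{1,a}\times\MM_{1,b}$ with $a+b=n+2$, which is the convention needed for the $n=20$ case to yield $\MM_{1,11}\times\MM_{1,11}$ (the paper's notation $\MM_{1,n'}\times\MM_{1,n-n'}$ is a slip in this respect). One small remark: your Poincar\'e-duality argument that $H^{\mathrm{odd},>11}(\MM_{1,11})=0$ is not actually needed for the corollary --- for $n<20$ one already has $\min(a,b)\le 10$ so there are no odd--odd K\"unneth terms at all, and for $n=20$ one only needs the single non-tautological class in $H^{22}$, which is supplied by $H^{11}\otimes H^{11}$ regardless of what happens in other degrees. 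That duality argument is nevertheless valid: $H^{13}(\MM_{1,11})$ is dual to $H^{9}(\MM_{1,11})$, and the latter vanishes because there are no cusp forms of weight below $12$. The non-Tate-ness of $\Sym^2 S[12]$ (via its $(22,0)$ Hodge piece) is exactly the right reason the class is non-tautological.
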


However, even though there are non-tautological classes in even degree in $\widetilde{\partial \M_{2,20}^\rt}$, it turns out that none of them contribute nontrivially to the cohomology of $\MM_{2,20}$. In Section 5, we prove the following result.

\begin{thmb} Let $n=20$, and suppose $k$ is even. The image of the map $q_!$ in the sequence \emph{($\partial$)} consists only of tautological classes.   \end{thmb}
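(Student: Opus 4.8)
The plan is to analyze the source of the map $q_!$ in the sequence ($\partial$) component by component, using the description of $\widetilde{\partial \M_{2,20}^\rt}$ as a disjoint union of spaces of the form $\MM_{1,n'}\times\MM_{1,21-n'}$ (with $n' \geq 1$) and $\MM_{1,22}/\sym_2$. By Corollary \ref{cor1} (the $n<20$ case applied to $\MM_{1,22}$, since $22 > 20$ is irrelevant — what matters is that we are looking at genus-one moduli, and the only even non-tautological class among the relevant boundary strata comes from $H^{11}(\MM_{1,11})\otimes H^{11}(\MM_{1,11})$), the only boundary component carrying non-tautological even cohomology is $\MM_{1,11}\times\MM_{1,11}$, where the offending class lives in $H^{22}$ and is (a Tate twist of) $S^{11}\boxtimes S^{11}$, the exterior tensor square of the cusp-form class in $H^{11}(\MM_{1,11})$. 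So the whole theorem reduces to showing that this one class is killed by the Galois-equivariant pushforward $q_! \colon H^{22}(\MM_{1,11}\times\MM_{1,11})(-1) \to H^{24}(\MM_{2,20})$.

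First I would record that $q_!$ is a morphism of Galois representations (or of Hodge structures), so it suffices to show that the sub-Galois-representation generated by the non-tautological class maps to zero. The Galois representation on $H^{11}(\MM_{1,11})$ attached to the weight-$12$ cusp form $\Delta$ is irreducible of dimension $2$ and has Hodge weights $\{0,11\}$; hence $H^{22}(\MM_{1,11}\times\MM_{1,11})(-1)$ contains $\Sym^2$ and $\wedge^2$ pieces of this, of motivic weights $22$ and $11+$Tate. The key point will be a weight/purity obstruction combined with a comparison with the cohomology of $\M_{2,20}^\rt$: in the exact sequence ($\partial$) the image of $q_!$ lands in $H^{24}(\MM_{2,20})$, and I would argue that the only way a class of this Galois type (a genuinely ``motivic weight $2k-2$'' piece, not of Tate type) can appear in $H^{24}(\MM_{2,20})$ is if it already appears in the cohomology of some local system on $\A_2$ of the appropriate weight — and here I invoke Harder's determination of the (Eisenstein and inner) cohomology of local systems on $\A_2$, which is exactly what the introduction flagged as the input. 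One shows that the relevant local system $\V_{a,b}$ on $\A_2$ (with $(a,b)$ forced by the Hodge/Tannakian weights, something like $a=b=10$ or a nearby value) has no cohomology containing a $\Delta\otimes\Delta$-isotypic summand of the required Hodge type; equivalently, by Eichler–Shimura/Harder the only cusp-form contributions to $H^\bullet(\A_2,\V_{a,b})$ are the Saito–Kurokawa and Yoshida-type lifts and genuine Siegel cusp forms, none of which is the naive tensor square $\Delta\boxtimes\Delta$ in the relevant range. Therefore the class cannot survive in $H^{24}(\MM_{2,20})$, so it must die under $q_!$.

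Concretely the steps are: (1) reduce, via Corollary \ref{cor1} and the Künneth decomposition of the boundary, to the single class $\mathrm{cl} \in H^{22}(\MM_{1,11}\times\MM_{1,11})(-1)$; (2) decompose the Galois representation it generates and pin down its motivic weight and Hodge type; (3) compute $W_k H^k(\M_{2,20}^\rt)$ far enough to see that it contains no class of this type — this uses the Leray spectral sequence for $\M_{2,n}^\rt \to \M_2$, the fact that $H^\bullet(\M_2,\V_{a,b})$ is controlled by $H^\bullet(\A_2,\V_{a,b})$ up to the hyperelliptic involution and a correction for the boundary $\A_2 \setminus \M_2$, and Harder's tables; (4) conclude from exactness of ($\partial$) that since the class does not appear in $W_k H^k(\M_{2,20}^\rt)$ and (by a dimension/semisimplicity count) there is ``no room'' for it in $H^{24}(\MM_{2,20})$ either as an honest cohomology class, it must be annihilated by $q_!$; alternatively, argue directly that the composite of $q_!$ with any projection detecting this Galois type is forced to vanish because the target $H^{24}(\MM_{2,20})$ has no such summand, which in turn follows from the semisimple Leray analysis of $\MM_{2,20}$.

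I expect the main obstacle to be step (3)–(4): namely, controlling $H^{24}(\MM_{2,20})$ — or at least its ``non-Tate, non-tautological'' part — precisely enough to certify the absence of a $\Delta\boxtimes\Delta$ summand. This is where Harder's computation of the full Eisenstein cohomology of local systems on $\A_2$ is essential and also delicate: one must correctly translate between local systems on $\A_2$ and on $\M_2$ (hyperelliptic involution, the boundary divisor $\A_{1,1}/\sym_2 \subset \A_2$), track all the Tate twists coming from the forgetful map $\M_{2,20}^\rt \to \M_2$ and from the $q_!$ Gysin shift $(-1)$, and verify that the numerology of weights genuinely excludes the class rather than merely failing to produce it. Getting the exact index $(a,b)$ of the relevant local system right, and checking that its inner cohomology contains only Siegel-type (not Yoshida-type $\Delta\times\Delta$) contributions in the needed Hodge bidegree, is the crux.
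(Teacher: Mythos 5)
Your reduction to the single class type coming from $H^{11}(\MM_{1,11})\otimes H^{11}(\MM_{1,11})$ (the ``PC classes'') is exactly what the paper does, and you correctly identify Harder's determination of the Eisenstein cohomology and the inner-cohomology vanishing for $\V_{10,10}$ as the arithmetic input. But your steps (3)--(4) have a genuine logical gap that makes the argument circular. You propose to show that $H^{24}(\MM_{2,20})$ contains no $\Delta\boxtimes\Delta$-type summand and then conclude that $q_!$ must kill the PC classes. However, the exact sequence $(\partial)$ exhibits $W_{24}H^{24}(\M_{2,20}^\rt)$ as the \emph{cokernel} of $q_!$; it says nothing about the kernel of $q_!$, and $H^{24}(\MM_{2,20})$ itself is built out of the image of $q_!$ together with this cokernel. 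So to show $H^{24}(\MM_{2,20})$ has no $\Delta\boxtimes\Delta$ summand you would already need to know that $q_!$ kills the PC classes --- which is the statement being proved. Computing $W_kH^k(\M_{2,20}^\rt)$ via Leray over $\M_2$, as you suggest, controls only the cokernel and cannot break this circularity. The ``semisimple Leray analysis of $\MM_{2,20}$'' you invoke as an alternative does not exist in a usable form: $\MM_{2,20}$ is a compactification and does not fiber smoothly over a manageable base.

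The paper's actual proof circumvents this by changing the question: dualize, switch to compactly supported cohomology, and show the PC classes are not in the image of the restriction $H^{22}_c(\MM_{2,20})\to H^{22}_c(\MM_{1,11}\times\MM_{1,11})$. The key geometric move, absent from your proposal, is to pass from $\MM_{2,20}$ to the intermediate open $W\subset\MM_{2,20}$ (the union of $\M_{2,20}$ with the strata $\M_{1,11}\times\M_{1,11}$), and then to replace $W$ by the $20$th fibered power $\C^{20}$ of the universal curve $\C\to\A_2\cong\M_2^{\mathit{ct}}$ and its boundary $\D^{20}\to\A_{1,1}$. Leray for $\C^{20}\to\A_2$, the decomposition-theorem computation of $\R^\bullet\rho_!\Q_\ell$ (with the summand $j_\ast\U(-1)$ in degree $2$), and a K\"unneth case-by-case analysis then reduce everything to $\gr^W_{22}H^2_c(\A_2,\F)\to\gr^W_{22}H^2_c(\A_{1,1},j^\ast\F)$, where Harder's theorem and the inner-cohomology vanishing of the paper's Theorem \ref{conj20} apply directly. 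Without this geometric scaffolding, your step (4) cannot close, and the difficulty you flag at the end of your writeup (``getting the exact index right'') is not where the real obstacle lies.

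One further small inaccuracy: you write that Yoshida-type lifts are among the possible cusp-form contributions and that none is ``the naive tensor square $\Delta\boxtimes\Delta$.'' In fact Yoshida lifts \emph{are} precisely the contributions attached to pairs of elliptic cusp forms, so this distinction is not available to you; what actually saves the day in the relevant range is that the inner cohomology vanishes outside the middle degree (the paper's Theorem \ref{conj20}), and the Eisenstein piece of weight $22$ in $H^2_c$ vanishes by Harder.
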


One should also understand the other end of the exact sequence, i.e.\ the cohomology $W_k H^k (\M_{2,n}^\rt)$. The strategy will be to fiber $\M_{2,n}^\rt$ over $\M_2$. This is why we found it useful to work specifically with the space of curves with rational tails: $\M_{2,n}^\rt \to \M_2$ is smooth and projective, and the Leray spectral sequence degenerates at $E_2$. An obstacle which appears at this point is that our knowledge of the cohomology of local systems in genus $2$ is more limited than in genus $1$. Nevertheless, we prove the following result:  

\begin{thma}Let $N$ be the smallest natural number such that  $W_{2i} H^{2i} (\M_{2,N}^\rt)$ contains a non-tautological class for some $i$. Then $N \in \{8,12,16,20\}$ and $i=\frac N 2 + 1$, and there are no non-tautological classes of the correct weight in other even degrees. \end{thma}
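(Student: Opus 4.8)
The plan is to analyse $\M_{2,n}^\rt$ through its forgetful map $\pi\colon\M_{2,n}^\rt\to\M_2$ remembering the genus $2$ component. This map is smooth and projective --- the fibre over a curve $[C]$ is the Fulton--MacPherson compactification $C[n]$ of the configuration space of $n$ points on $C$ --- so by Deligne the Leray spectral sequence degenerates and
\[H^k(\M_{2,n}^\rt)=\bigoplus_{p+q=k}H^p\bigl(\M_2,R^q\pi_*\Q\bigr).\]
Because $C[n]$ is smooth proper, $R^q\pi_*\Q$ is pure of weight $q$; it decomposes as a sum of local systems $\V_\lambda(-s)$, where $\V_\lambda$ runs over the irreducible symplectic local systems on $\M_2$ (with $\lambda=(\lambda_1\geq\lambda_2\geq 0)$ and $\V_\lambda$ pure of weight $|\lambda|=\lambda_1+\lambda_2$), $s=\tfrac12(q-|\lambda|)$, and the multiplicities are purely combinatorial. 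I will use: (i) $\V_\lambda$ occurs in $R^q\pi_*\Q$ only when $|\lambda|\leq\min(q,n)$ and $|\lambda|\equiv q\pmod 2$; and (ii) since the exceptional loci of $C[n]\to C^n$ lie over diagonals, on which fewer than $n$ of the tensor factors of $H^1(C)^{\otimes n}$ survive, a local system $\V_\lambda$ with $|\lambda|=n$ occurs only inside $H^1(C)^{\otimes n}\subset H^n(C^n)\subset H^n(C[n])$, i.e.\ only in $R^n\pi_*\Q$.

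First I would dispose of the $\lambda=0$ summand: it contributes $\bigoplus_q H^p(\M_2)(-q/2)^{\oplus m}$ to $H^{p+q}(\M_{2,n}^\rt)$, and since $H^\bullet(\M_2)$ is one-dimensional in degrees $0$ and $2$ and tautological there (the multiplicities being realised by products of $\psi$-, diagonal- and $\kappa$-classes), this whole summand is tautological. So every non-tautological class comes from a summand with $\lambda\neq 0$. Now the bookkeeping: $\M_2$ is a smooth affine Deligne--Mumford stack of dimension $3$, so $H^p(\M_2,\V_\lambda)=0$ for $p>3$ and, for $\lambda\neq 0$, for $p=0$; moreover $H^\bullet(\M_2,\V_\lambda)=0$ entirely unless $|\lambda|$ is even, since $-1\in\Sp_4(\Z)$ acts on $\V_\lambda$ by $(-1)^{|\lambda|}$. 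Combined with (i), a summand contributing to an even total degree $2i$ forces $|\lambda|$ even, $q$ even, and $p=2$; a class coming from $H^2(\M_2,\V_\lambda)(-s)$ inside $H^{2i}(\M_{2,n}^\rt)$ automatically has weight $\geq 2i$, and it lies in $W_{2i}H^{2i}(\M_{2,n}^\rt)$ precisely when it comes from the lowest-weight piece $\mathrm{Gr}^W_{|\lambda|+2}H^2(\M_2,\V_\lambda)$. By (ii) such a class appears for the first time on $\M_{2,|\lambda|}^\rt$, and there only in degree $|\lambda|+2$; hence $N=|\lambda|$, $i=\tfrac N2+1$, and on $\M_{2,N}^\rt$ no even degree other than $N+2$ carries a non-tautological class of the correct weight (lower degrees are controlled by the minimality of $N$; a higher even degree would require $\V_\lambda$ with $|\lambda|=N$ in some $R^q\pi_*\Q$ with $q>N$, which (ii) forbids). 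This reduces Theorem~\ref{aa} to the statement that \emph{the smallest even $|\lambda|>0$ for which $\mathrm{Gr}^W_{|\lambda|+2}H^2(\M_2,\V_\lambda)$ contains a non-tautological class lies in $\{8,12,16,20\}$}.

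To study $\mathrm{Gr}^W_{|\lambda|+2}H^2(\M_2,\V_\lambda)$ I would use that, under the Torelli map, $\M_2\cong\A_2\setminus\A_{1,1}$ with $\A_{1,1}=\Sym^2\A_1$ a divisor, and exploit the Gysin sequence relating $H^\bullet(\A_2,\V_\lambda)$, $H^\bullet(\M_2,\V_\lambda)$ and $H^{\bullet-2}(\A_{1,1},\V_\lambda|_{\A_{1,1}})(-1)$. Künneth on $\Sym^2\A_1$ together with Eichler--Shimura shows that the lowest weight occurring in $H^1(\A_{1,1},\V_\lambda|_{\A_{1,1}})$ is $|\lambda|+1$, so $\mathrm{Gr}^W_{|\lambda|+2}\bigl(H^1(\A_{1,1},\V_\lambda|_{\A_{1,1}})(-1)\bigr)=0$; hence $\mathrm{Gr}^W_{|\lambda|+2}H^2(\M_2,\V_\lambda)$ is a quotient of $\mathrm{Gr}^W_{|\lambda|+2}H^2(\A_2,\V_\lambda)$ by a map of Tate Hodge structures coming from $H^0(\A_{1,1},\V_\lambda|_{\A_{1,1}})(-1)$ (which is tautological). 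Now I would feed in Harder's complete determination \cite{harder} of the cohomology of local systems on $\A_2$, together with the dimension formulas for vector-valued genus $2$ Siegel cusp forms: the piece $\mathrm{Gr}^W_{|\lambda|+2}H^2(\A_2,\V_\lambda)$ should vanish unless $\lambda=(l,l)$ and $2l+4$ is the weight of a level one elliptic cusp form whose functional equation has sign $+1$ --- equivalently, a weight divisible by $4$ --- so that $2l\in\{8,12,16,20,24,\dots\}$; in that case it is one-dimensional, spanned by a Tate class arising from the Saito--Kurokawa contribution to the cohomology of $\A_2$. In particular nothing of the sort occurs below $|\lambda|=8$, so $N\geq 8$; and by \cite{graberpandharipande} together with Theorem~\ref{bb} (which together show that the non-tautological even cohomology of $\MM_{2,20}$ must already be visible in $W_\bullet H^\bullet(\M_{2,20}^\rt)$), at least one value with $|\lambda|\leq 20$ is realised. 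This leaves exactly the four candidates $\{8,12,16,20\}$.

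The main obstacle --- and the reason the theorem only narrows $N$ to four values --- is the last step: for $|\lambda|\in\{8,12,16\}$ one has to decide whether the one-dimensional Saito--Kurokawa Tate class in $\mathrm{Gr}^W_{|\lambda|+2}H^2(\M_2,\V_{l,l})$ is tautological, and our understanding of the tautological ring of $\M_2$ with local-system coefficients is not sharp enough to do so; this is why only $N=20$ can be set against the available evidence, and why $N=20$ is stated as a speculation. A secondary, more routine task is to check that every contribution that is \emph{not} of this Saito--Kurokawa type --- the image from $\A_{1,1}$, the Eisenstein part of $\mathrm{Gr}^W_{|\lambda|+2}H^2(\A_2,\V_\lambda)$ for $\lambda\neq(l,l)$, and the full $\lambda=0$ summand --- consists of tautological classes, so that no other non-tautological classes of the correct weight can appear.
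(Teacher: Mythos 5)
Your overall strategy is the one the paper uses: fiber $\M_{2,n}^\rt$ over $\M_2$, exploit degeneration of the Leray spectral sequence, decompose $R^q\pi_\ast\Q_\ell$ into local systems $\V_\lambda(-s)$, and push the question to the lowest-weight piece of $H^2(\M_2,\V_\lambda)$, which is then analysed via the Gysin sequence for $\M_2\hookrightarrow\A_2$ together with Harder's Eisenstein computations. That part is sound. But there is one genuine gap and a couple of less central deviations.

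The gap: you only prove one inclusion of the needed equivalence between ``tautological'' and ``monodromy invariant.'' You argue that the $\lambda=0$ summand is tautological, hence any non-tautological class lives in a $\lambda\neq 0$ summand; that is correct but is the easy direction. To conclude that a nonzero class in $\gr^W_{|\lambda|+2}H^2(\M_2,\V_\lambda)$ with $\lambda\neq 0$ actually produces a \emph{non}-tautological class on $\M_{2,n}^\rt$ you need the converse: that every tautological class on $\M_{2,n}^\rt$ is monodromy invariant, i.e.\ lands in the $\lambda=0$ summand under Leray. Without this, nothing prevents the class coming from, say, $\V_{10,10}$ from accidentally being tautological. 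The paper establishes precisely this point: it shows $RH^\bullet(\FM(X,n))=H^\bullet(\FM(X,n))^{\Sp_4}$ via Weyl's construction, and then deduces $RH^\bullet(\M_{2,n}^\rt)\cong H^\bullet(\FM(X,n))^{\Sp_4}$ using that tautological classes on $\M_{2,n}^\rt$ are classes of strata (possibly times $\psi$-classes), hence manifestly monodromy invariant. Your proposal uses the conclusion of this identification implicitly but never proves it, and your phrasing ``decide whether the $\ldots$ class in $\gr^W_{|\lambda|+2}H^2(\M_2,\V_{l,l})$ is tautological'' reflects the confusion: the correct question is whether that cohomology group is nonzero, not whether a class in it is tautological (that notion does not even make sense for local-system cohomology). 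Once the monodromy-invariant description is in place, nonvanishing automatically gives a non-tautological class.

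Two smaller points. First, for the upper bound $N\leq 20$ you invoke Graber--Pandharipande together with Theorem~\ref{bb}; this works, but it makes Theorem~\ref{aa} depend on Theorem~\ref{bb}, which the paper is careful to avoid. The paper instead proves directly (Proposition~\ref{onm2}) that $W_{22}H^2(\M_2,\V_{10,10})\neq 0$: there are two cusp eigenforms of weight $24$, giving a two-dimensional $W_{22}H^2_\Eis(\A_2,\V_{10,10})$, while $H^0(\A_{1,1},\V_{10,10})$ is only one-dimensional, so something must survive in the Gysin sequence. Second, the classes in $\gr^W_{2+|\lambda|}H^2(\A_2,\V_{2a,2a})$ you call ``Saito--Kurokawa'' are more precisely residual Eisenstein classes associated to the Siegel parabolic, as Harder identifies them; the list $\{8,12,16,20\}$ comes from the existence of a cusp eigenform of weight $4+4a$ with $L(f,2+a)\neq 0$ and the bound $|\lambda|\leq 20$, not from a condition on the sign of the functional equation. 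Finally, your assertion that inner cohomology ``should vanish'' outside the middle degree is not something you can leave as a conjecture: the paper proves it for $|\lambda|\leq 20$ (Theorem~\ref{conj20}) by combining Faltings--Chai's Hodge filtration bounds, the Euler characteristic computations, and the numerical conjectures of Faber--Van~der~Geer for $\V_{10,10}$, and this input is genuinely needed.
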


The existence of a non-tautological algebraic cycle on $\MM_{2,20}$ which restricts nontrivially to the interior \cite{graberpandharipande} shows that $N \leq 20$ in the preceding theorem. We give an alternative proof of this result in Sections \ref{localsystems} and \ref{monodromy}. 

\begin{cor}\label{corg}The tautological cohomology ring $RH^\bullet( \MM_{2,N})$ is not Gorenstein. \end{cor}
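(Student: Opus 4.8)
The plan is to combine Theorem~\ref{aa} (Theorem A), Theorem~\ref{bb} (Theorem B), and the exact sequence ($\partial$) to show that $RH^\bullet(\MM_{2,N})$ fails Poincar\'e duality, and then invoke the observation from the introduction that a Gorenstein tautological \emph{cohomology} ring is a necessary condition for the tautological Chow ring to be Gorenstein. First I would record what Theorem A tells us: $N \in \{8,12,16,20\}$ is the smallest integer such that $W_{2i}H^{2i}(\M_{2,N}^\rt)$ contains a non-tautological class, and this happens only for $i = \tfrac N2 + 1$, i.e.\ in cohomological degree $k = N+2$. For this same $N$, Corollary~\ref{cor1} (if $N<20$) or Theorem B (if $N=20$) guarantees that the image of $q_!$ in ($\partial$) in every even degree consists only of tautological classes. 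Feeding this into Lemma~\ref{lem1}, we conclude that for all even $k \neq N+2$ the group $H^k(\MM_{2,N})$ is entirely tautological, while in degree $k = N+2$ there is a non-tautological class (it cannot die in the cokernel of $q_!$ since $W_{N+2}H^{N+2}(\M_{2,N}^\rt)$ has a non-tautological class and, by minimality of $N$ together with Theorem B / Corollary~\ref{cor1}, the image of $q_!$ is tautological).

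Next I would argue that this forces non-Gorensteinness. The key structural input is that $H^\bullet(\MM_{2,N})$ satisfies Poincar\'e duality (it is a smooth proper Deligne--Mumford stack), so the cup product pairing $H^k \times H^{d-k} \to H^d \cong \Q$ is perfect, where $d = \dim_{\mathbb R}\MM_{2,N} = 2(3g-3+n) = 2(3+N)$. The tautological subring $RH^\bullet$ would be Gorenstein with socle in the top degree $d$ precisely if this pairing restricts to a perfect pairing on $RH^\bullet$, i.e.\ if every non-tautological class in degree $k$ is paired (under the ambient perfect pairing) with some non-tautological class in degree $d-k$ modulo $RH^\bullet$. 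Concretely: if $RH^k$ is a proper subspace of $H^k$, then so must $RH^{d-k}$ be a proper subspace of $H^{d-k}$. Now take $k = N+2$: we have just shown $RH^{N+2} \subsetneq H^{N+2}$. The complementary degree is $d - (N+2) = 2(3+N) - (N+2) = N+4$, which is even and, since $N \geq 8$, satisfies $N+4 \neq N+2$; moreover $N+4$ is \emph{above} the middle degree $d/2 = N+3$ (indeed $N+2$ is below it). But we have also shown that $H^{N+4}(\MM_{2,N})$ is entirely tautological. Hence the non-tautological class in degree $N+2$ has no non-tautological partner in degree $N+4$, the pairing on $RH^\bullet$ is degenerate, and $RH^\bullet(\MM_{2,N})$ is not Gorenstein.

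Finally I would close the loop back to the Chow-theoretic tautological ring: as explained in the introduction, if $R^\bullet(\MM_{2,N})$ were Gorenstein then its socle (in the top degree) would map injectively under the cycle class map to $RH^\bullet(\MM_{2,N})$ and would be killed in any proper quotient, which together force the cycle class map $R^\bullet \to RH^\bullet$ to be injective, hence an isomorphism onto $RH^\bullet$; but then $RH^\bullet$ would be Gorenstein too, contradicting the previous paragraph. Therefore $R^\bullet(\MM_{2,N})$ is not Gorenstein, which is the assertion of Corollary~\ref{corg}.

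The step I expect to be delicate is the bookkeeping that the non-tautological class genuinely \emph{survives} to $H^{N+2}(\MM_{2,N})$ and is not merely a non-tautological class in the quotient $W_{N+2}H^{N+2}(\M_{2,N}^\rt)$ that happens to lie in the image of $q_!$ from a non-tautological boundary class. This is exactly why Theorem B (and Corollary~\ref{cor1} for smaller $N$) is needed: it rules out any "cancellation" coming from the non-tautological boundary cohomology of $\MM_{1,11}\times\MM_{1,11}$. Everything else is formal once one has Poincar\'e duality for $H^\bullet(\MM_{2,N})$, the parity/degree arithmetic showing $N+4 \neq N+3$, and the elementary fact that a Gorenstein graded algebra cannot have a proper subspace in one degree paired against a full space in the complementary degree.
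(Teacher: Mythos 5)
Your argument is correct and is essentially the paper's own: establish that $H^k(\MM_{2,N})$ is entirely tautological for even $k \neq N+2$ via Lemma~\ref{lem1} (feeding in Theorem~\ref{aa} together with Corollary~\ref{cor1} or Theorem~\ref{bb}), note that $H^{N+2}$ and $H^{N+4}$ are Poincar\'e dual, and conclude that $RH^\bullet$ cannot satisfy Poincar\'e duality. One small remark on the ``delicate step'' you flag at the end: the survival of a non-tautological class to $H^{N+2}(\MM_{2,N})$ is in fact automatic from surjectivity of the restriction map in ($\partial$) and the very definition of $RH^\bullet(\M_{2,N}^\rt)$ as the image of $RH^\bullet(\MM_{2,N})$ (a tautological lift would force the image to be tautological); Theorem~\ref{bb} and Corollary~\ref{cor1} are needed instead to show that the \emph{other} even degrees are fully tautological, which is exactly what makes degree $N+4$ tautological and breaks the duality.
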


\begin{proof}Consider the exact sequence ($\partial$). By combining the results stated so far in this section, we see that when $k \neq N+2$ is even, then $H^{k}(\MM_{2,N})$ consists only of tautological classes, whereas there is a non-tautological class in $H^{N+2}(\MM_{2,N})$. But $H^{N+2}(\MM_{2,N})$ and $H^{N+4}(\MM_{2,N})$ are Poincar\'e dual to each other, so the tautological cohomology ring cannot satisfy Poincar\'e duality.  \end{proof}

As explained in the introduction, we deduce also the following result.

\begin{cor}The tautological ring $R^\bullet( \MM_{2,N})$ is not Gorenstein. \end{cor}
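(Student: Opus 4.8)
The plan is to deduce this from Corollary~\ref{corg}, which asserts that the tautological cohomology ring $RH^\bullet(\MM_{2,N})$ is not Gorenstein, by way of the elementary observation already sketched in the introduction: if the Chow-theoretic tautological ring $R^\bullet(\MM_{2,N})$ were Gorenstein, then so would be its image $RH^\bullet(\MM_{2,N})$ under the cycle class map. First I would recall that a graded Gorenstein $\Q$-algebra $R^\bullet$ with one-dimensional socle in top degree $d$ has the property that for any proper graded ideal $I \subsetneq R^\bullet$, the socle is not contained in $I$; equivalently, any nonzero graded quotient $R^\bullet \to R^\bullet/I$ is injective on the socle $R^d$. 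This is standard: the perfect pairing $R^i \times R^{d-i} \to R^d \cong \Q$ forces every nonzero ideal to meet $R^d$ nontrivially, hence to contain all of $R^d$ since $\dim_\Q R^d = 1$; so if $I$ contained the socle it would have to be everything.

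Next I would apply this to the cycle class map $\mathrm{cl}\colon R^\bullet(\MM_{2,N}) \to RH^\bullet(\MM_{2,N})$, which by definition of the tautological cohomology ring is a surjective ring homomorphism. We know from \cite{gv1,gv2} and the references cited in the introduction that $R^\bullet(\MM_{2,N})$ vanishes above its expected top degree and that its top degree is one-dimensional; moreover the top graded piece maps isomorphically to $RH^\bullet$ in that degree, since the fundamental class of a point is nonzero in cohomology and lies in the tautological cohomology ring. Assuming for contradiction that $R^\bullet(\MM_{2,N})$ is Gorenstein, the kernel of $\mathrm{cl}$ is a graded ideal which does not contain the socle (the socle maps injectively, being the class of a point up to scalar), hence by the observation above the kernel must be zero. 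Therefore $\mathrm{cl}$ is an isomorphism of graded rings, so $RH^\bullet(\MM_{2,N}) \cong R^\bullet(\MM_{2,N})$ would be Gorenstein, contradicting Corollary~\ref{corg}.

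The only genuinely delicate point is the assertion that the socle of $R^\bullet(\MM_{2,N})$ maps injectively under $\mathrm{cl}$; everything else is formal commutative algebra together with Corollary~\ref{corg}. I would justify this by noting that the top nonzero degree of $R^\bullet(\MM_{2,N})$ is the expected one, that it is generated by (a nonzero multiple of) a point class, and that the class of a point is manifestly nonzero in $H^\bullet(\MM_{2,N})$ because $\MM_{2,N}$ is a smooth proper Deligne--Mumford stack of the appropriate dimension (degree of a point against the fundamental class is nonzero). Hence the one-dimensional socle is not killed by $\mathrm{cl}$. With that in hand, the contradiction is immediate and the corollary follows. I do not expect any serious obstacle here; this is a short formal consequence, which is precisely why the authors state it as an immediate corollary.
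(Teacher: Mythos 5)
Your approach is the same as the paper's: assume $R^\bullet(\MM_{2,N})$ is Gorenstein, use the fact that the socle (a point class) maps injectively to cohomology together with the Gorenstein pairing to conclude that the cycle class map $\phi\colon R^\bullet(\MM_{2,N})\to RH^\bullet(\MM_{2,N})$ is an isomorphism, and then contradict Corollary~\ref{corg}. The paper phrases it element-wise (pick $\alpha$ in $\ker\phi$, find $\beta$ pairing with it, derive a contradiction from $\phi(\alpha\cup\beta)\neq 0$), while you phrase it ideal-theoretically, but it is the same argument.

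One thing to fix in your write-up: the commutative-algebra lemma you state is backwards. You write ``for any proper graded ideal $I\subsetneq R^\bullet$, the socle is not contained in $I$'' and ``any nonzero graded quotient is injective on the socle.'' This is false (e.g.\ $I=(x)\subset\Q[x]/(x^2)$ is a proper ideal containing the socle). The correct statement --- which is what your justification actually proves and what your application actually uses --- is the opposite: in a graded Gorenstein algebra the socle is contained in \emph{every} nonzero ideal, equivalently the socle maps to zero in every nontrivial quotient. That is exactly the form the paper uses in the introduction. With the statement corrected, your application (``the kernel does not contain the socle, hence the kernel is zero'') reads as the contrapositive and goes through.
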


\begin{proof}Assume otherwise. Then $\phi \colon R^\bullet (\MM_{2,N} )\to RH^\bullet (\MM_{2,N})$ is not an isomorphism. Pick $\alpha$ in the kernel. Since we assumed $R^\bullet (\MM_{2,N})$ to be Gorenstein, we can pick some $\beta$ which pairs nontrivially with $\alpha$. But then $\phi(\alpha)\cup \phi(\beta) = \phi(\alpha \cup \beta) \neq 0$, since $\alpha \cup \beta$ is the socle, contradicting $\phi(\alpha)=0$. \end{proof}

\section{Cohomology of local systems}
\label{localsystems}
In \cite{genusone} a key point was to compute the lowest weight part of the cohomology of $\M_{1,n}$, and to do this, one needed first to know the lowest weight part of the cohomology of local systems on $\M_{1,1}$. More precisely, we consider natural local systems $\V_k$ of weight $k$ whose cohomology is well studied: one has
\[ W_{i+k} H^i(\M_{1,1},\V_k) \neq 0\]
only if $k=i=0$ or if $k \geq 10$ is even, $i=1$, and we are considering classes associated to cusp forms. Since we moreover only care about even cohomology of $\MM_{1,n}$, hence even weights, we see that only the trivial local system will contribute. Our goal in this section will be to carry out a similar analysis on $\M_2$ and $\A_2$: we will find that we need to consider only the local systems of the form $\V_{2a,2a}$, $a \geq 0$, whose definition we now recall. 

\subsection{The local systems}Let $\A_g$ be the moduli space of principally polarized abelian varieties of dimension $g$. The orbifold $\A_g(\mathbf C)$ is a locally symmetric hermitian space: it is isomorphic to $G(\Z) \backslash G(\mathbf R)/K$, where $G= \Sp_{2g}$ and $G(\mathbf R)/K$ is Siegel's upper half space. A rational representation of $G$ defines a (rational) local system on $\A_g$. Such representations are classified by their highest weights, which are $g$-tuples $\lambda = \lambda_1 \geq \cdots \geq \lambda_g \geq 0$. If all inequalities are strict, then the weight $\lambda$ and the corresponding local system $\V_\lambda$ are both called \emph{regular}. 

The sheaf $\V_\lambda \otimes \Q_\ell$ is naturally the base change of a local system (smooth $\ell$-adic sheaf) on $\A_g / \Q$, as explained in \cite{harderbook}, and its cohomology groups carry an action of $\mathrm{Gal}_{\overline \Q / \Q}$. The cohomology of the sheaf $\V_\lambda \otimes \mathbf C$ can be understood by transcendental methods. We will have reason to consider both the $\ell$-adic and the complex realizations of these sheaves, which are related by comparison isomorphisms in the usual way. 

The local systems $\V_\lambda$ should really be thought of as realizations of motivic sheaves. Concretely, they admit the following purely algebro-geometric description. Let $ \pi \colon \X \to \A_g$
be the universal abelian variety, and put $\V = \R^1 \pi_\ast \Q$; this is a rank $g$ local system with a symplectic pairing 
\[ \V \otimes \V \to \Q(-1). \]
(The Tate twist here indicates that we should really be working with $G = \mathrm{GSp}_{2g}$, but this will not really matter for our purposes.) Every irreducible representation of the symplectic group can be obtained from the standard representation by applying a symplectic Schur functor; we define the local system $\V_\lambda$, $\lambda \vdash n$, to be the image of the corresponding Schur functor applied to $\V^{\otimes n}$. 

We shall be interested in the cohomology groups of these local systems. The corresponding Galois representations carry a wealth of arithmetic information. A first observation is that the cohomology group $H^i(\A_g,\V_\lambda)$ can only be nonzero if the weight $|\lambda| = \lambda_1 + \ldots + \lambda_g$ is even: in the odd case, inversion on the universal abelian variety acts as multiplication by $-1$ on the fibers of $\V_\lambda$. A second observation is that since the Leray spectral sequence for the projection $\X^n \to \A_g$ from the $n$th fibered power of the universal abelian variety degenerates at $E_2$, the usual bounds on the weights of the cohomology of a nonsingular variety imply that 
\[ H^i(\A_g,\V_\lambda) \text{ resp. } H^i_c(\A_g,\V_\lambda) \]
are mixed Hodge structures/Galois representations of weight at least $i+|\lambda|$  (resp. at most $i+|\lambda|$). This can also be seen from general facts about the behavior of weights under the functors $\R f_!$ and $\R f_\ast$ \cite{weil2}.

In particular, if we only want to consider the lowest weight part and we want this weight to be even, then we can restrict our attention to $H^i(\A_g,\V_\lambda)$ where $i$ is even. 

\subsection{Eisenstein and inner cohomology} Consider the natural map $$\rho\colon H^i_c(\A_g,\V_\lambda) \to H^i(\A_g,\V_\lambda).$$ We define the \emph{Eisenstein cohomology} of $\V_\lambda$ to be the cokernel of $\rho$ and the \emph{inner cohomology} to be the image. These will be denoted by $H^i_\Eis(\A_g,\V_\lambda)$ and $H^i_!(\A_g,\V_\lambda)$, respectively. One can also define the compactly supported Eisenstein cohomology $H^i_{c,\Eis}$ as the kernel of $\rho$. The study of Eisenstein cohomology was initiated by Harder, see e.g. \cite{hardergl2,harderbook}. There is a short exact sequence \[ 0 \to H^i_!(\A_g,\V_\lambda) \to H^i(\A_g,\V_\lambda) \to H^i_\Eis(\A_g,\V_\lambda) \to 0. \]
The weight bounds quoted in the previous paragraph imply that $H^i_!(\A_g,\V_\lambda)$ is pure of weight $i+|\lambda|$. Since the sheaves $\V_\lambda$ are essentially self-dual, we also see that $H^\bullet_!(\A_g,\V_\lambda)$ satisfies Poincar\'e duality. One should think of the Eisenstein cohomology as cohomology ``at infinity'', associated to boundary strata in a suitable compactification of $\A_g$, and the inner cohomology as coming from the interior.  

\newcommand{\e}{\mathbf{e}}
\subsection{Conjectures via point counts}One can form a kind of ``motivic'' Euler characteristic of the local systems $\V_{\lambda}$. Let $\Gal$ be the abelian category of continuous $\ell$-adic representations of $\mathrm{Gal}_{\overline \Q / \Q}$, and define
\[ \e(\A_g, \V_\lambda) = \sum_{i \geq 0} (-1)^i [H^i_c(\A_g,\V_\lambda \otimes \Q_\ell)]\]
where the sum is taken in the Grothendieck group $K_0(\Gal)$. The Grothendieck--Lefschetz trace formula for stacks \cite{behrend93} and the interpretation of $\V_\lambda$ in terms of fibered powers of the universal abelian variety allows one to give a concrete interpretation of the trace of the Frobenius $\Frob_q$ on $\e(\A_g,\V_\lambda)$ as a sum over all isomorphism classes of principally polarized abelian varieties over $\field F_q$, weighted in a specific way by the eigenvalues of Frobenius on their first cohomology. Conversely, if one computes these traces of Frobenius by explicitly enumerating all {ppav}s over $\field F_q$, one can make precise conjectures regarding the value of $\e(\A_g,\V_\lambda)$ for every $\lambda$. This has been carried out for $g \leq 3$ in an ambitious project by Bergstr\"om, Faber and Van der Geer \cite{fvdg1,fvdg2,bfg08,bfg11}. The case $g=2$, which will be relevant for us, is well described in \cite[\S 6]{bfg11}.

\subsection{The case $g=2$}
Recently Harder \cite{harder} has completely determined the Eisenstein cohomology $H^i_\Eis(\A_2,\V_\lambda \otimes \Q_\ell)$ for any weight $\lambda$.  Harder's computations lead in particular to a strengthening of the conjectural formulas of Faber and Van der Geer: if one knows $H^i_\Eis(\A_2,\V_\lambda \otimes \Q_\ell)$ for all $i$ and assumes their formula for $\e(\A_2,\V_\lambda)$, then one immediately also knows the sum
\[ \sum_{i \geq 0} (-1)^i [H^i_!(\A_2,\V_\lambda \otimes \Q_\ell)] \]
in $K_0(\Gal)$ by subtraction. But $H^i_!$ satisfies purity, so we can read off each individual cohomology group by applying $\gr^W_{i+|\lambda|}$. Thus we have a conjectural formula for each cohomology group $H^i(\A_2,\V_\lambda \otimes \Q_\ell)$ considered as an $\ell$-adic Galois representation (up to semisimplification). A special case of this conjectural description, which still covers all cases relevant for us, is the following:

\begin{conjecture}\label{conj}Let $\lambda = \lambda_1 \geq \lambda_2 \geq 0$ be arbitrary, and suppose $i \neq 3$. Then $H^i_!(\A_2,\V_\lambda)$ vanishes. \end{conjecture}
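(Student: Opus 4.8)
The plan is to prove Conjecture~\ref{conj} by a localization argument attached to the stratification $\A_2=\M_2\sqcup\A_{1,1}$, in which $j\colon\M_2\hookrightarrow\A_2$ is the open locus of Jacobians of smooth genus-$2$ curves and $i\colon\A_{1,1}\cong[\A_1\times\A_1/\sym_2]\hookrightarrow\A_2$ is the closed substack of decomposable abelian surfaces, a smooth divisor. Two soft inputs carry one most of the way. First, $\M_2$ is affine of dimension $3$, so $H^k(\M_2,\V_\lambda)=0$ for $k>3$ and, dually, $H^k_c(\M_2,\V_\lambda)=0$ for $k<3$. Second, rationally $H^\bullet(\A_{1,1},-)=H^\bullet(\A_1\times\A_1,-)^{\sym_2}$ and $\A_1$ is affine of dimension $1$, so $H^k(\A_{1,1},-)=0$ for $k>2$ and $H^k_c(\A_{1,1},-)=0$ for $k<2$. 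Feeding these into the Gysin sequence
\[ \cdots\to H^{k-2}(\A_{1,1},\V_\lambda)(-1)\to H^k(\A_2,\V_\lambda)\to H^k(\M_2,\V_\lambda)\to\cdots \]
and its analogue on compact supports shows at once that $H^k(\A_2,\V_\lambda)=0$ for $k\geq 5$ and $H^k_c(\A_2,\V_\lambda)=0$ for $k\leq 1$; hence the interior cohomology $H^k_!$, being the image of $H^k_c\to H^k$, can be nonzero only for $k\in\{2,3,4\}$.

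Since $\V_\lambda$ is essentially self-dual, $H^\bullet_!(\A_2,\V_\lambda)$ satisfies Poincar\'e duality, so $H^2_!$ and $H^4_!$ are dual up to a Tate twist and it suffices to show $H^4_!(\A_2,\V_\lambda)=0$. From the Gysin sequence, $H^4(\A_2,\V_\lambda)$ is the cokernel of a connecting map $\partial\colon H^3(\M_2,\V_\lambda)\to H^2(\A_{1,1},\V_\lambda)(-1)$. As $H^4_!(\A_2,\V_\lambda)$ is pure of weight $4+|\lambda|$, passing to $\gr^W_{4+|\lambda|}$ exhibits it as a subquotient of $\gr^W_{4+|\lambda|}\bigl(H^2(\A_{1,1},\V_\lambda)(-1)\bigr)$, which is a Tate twist of the middle interior cohomology $H^2_!(\A_{1,1},\V_\lambda)$ of the affine surface $\A_{1,1}$. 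By K\"unneth and the Eichler--Shimura description of the cohomology of local systems on $\A_1$, this group is an explicit sum of Tate-twisted tensor squares of cusp-form Galois representations, indexed by the summands $\V_k\boxtimes\V_l$, with $k,l\geq 10$ even, that occur in the restriction of $\V_\lambda$ to $\A_1\times\A_1$; in particular it is already nonzero for $\lambda=(10,10)$, so the vanishing claimed in the conjecture is not automatic. That claim is precisely the surjectivity of $\gr^W_{4+|\lambda|}\partial$: every such cusp-form class in $H^4(\A_2,\V_\lambda)$ must already lie in the image of the Gysin map from the divisor $\A_{1,1}$, and hence be an Eisenstein class. This one verifies against the explicit description of $H^3(\M_2,\V_\lambda)$ and of the connecting map $\partial$.

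This surjectivity is the main obstacle; every other step is formal. Independently, the statement can be confirmed --- and $H^3_!(\A_2,\V_\lambda)$ simultaneously computed --- by combining Harder's complete determination of $H^\bullet_\Eis(\A_2,\V_\lambda)$, together with its Poincar\'e dual $H^\bullet_{c,\Eis}$, with the Faber--Van der Geer point-count formula for $\e(\A_2,\V_\lambda)=\sum_k(-1)^k[H^k_c(\A_2,\V_\lambda)]$: from the short exact sequences $0\to H^k_{c,\Eis}\to H^k_c\to H^k_!\to 0$ one obtains
\[ \sum_k(-1)^k[H^k_!(\A_2,\V_\lambda)]=\e(\A_2,\V_\lambda)-\sum_k(-1)^k[H^k_{c,\Eis}(\A_2,\V_\lambda)] \]
in $K_0(\Gal)$, and since each $H^k_!$ is pure of weight $k+|\lambda|$ with $|\lambda|$ fixed, applying $\gr^W_{j+|\lambda|}$ isolates $(-1)^j[H^j_!]$ and makes its vanishing for $j\neq 3$ visible. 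The price of this second route is its reliance on the (still open) point-count conjecture --- which is exactly why the statement is recorded here as a conjecture.
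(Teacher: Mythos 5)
You should first note that the paper does not prove Conjecture~\ref{conj}: the authors write explicitly that they do not know how to prove it, and only later establish the weaker Theorem~\ref{conj20}, namely the case $|\lambda|\leq 20$, ``by slightly ad hoc methods.'' Your proposal correctly recognizes that the statement is genuinely conjectural, and your formal reductions line up exactly with the paper's: concentration of $H^\bullet_!$ in degrees $2,3,4$ (affineness of $\M_2$ and $\A_{1,1}$, Gysin sequence), Poincar\'e duality of inner cohomology reducing everything to $H^4_!=0$, and the identification of $H^4_!(\A_2,\V_\lambda)$ as a subquotient of $\gr^W_{2+|\lambda|}H^2(\A_{1,1},\V_\lambda)(-1)$.

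Where you diverge is in how you try to close the gap, and your first route contains a real defect. You reduce to ``the surjectivity of $\gr^W_{4+|\lambda|}\partial$'' and then assert ``This one verifies against the explicit description of $H^3(\M_2,\V_\lambda)$ and of the connecting map $\partial$.'' But no such explicit description exists in general --- $H^3(\M_2,\V_\lambda)$ is precisely where the hard arithmetic lives, and establishing this surjectivity is equivalent in difficulty to the conjecture itself. The paper deliberately avoids ever analyzing $\partial$ or $H^3(\M_2,\V_\lambda)$; instead it analyzes the target $\gr^W_{2+|\lambda|}H^2(\A_{1,1},\V_\lambda)$ directly, and observes that this group simply vanishes for $|\lambda|<20$, so that no surjectivity argument is needed there. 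For $|\lambda|=20$ the target is nonzero (it is $\wedge^2 S[12]\cong\Q(-11)$), and the paper then deploys case-specific tools your proposal does not mention: Faltings' vanishing for regular weights handles all $\lambda=(a,b)$ with $a>b>0$; the Faltings--Chai BGG-complex description of the Hodge filtration rules out any class of Tate type in $H^\bullet(\A_2,\V_{20,0})$; and for $\V_{10,10}$ the paper computes the unconditional inner Euler characteristic, combines it with Poincar\'e duality, and derives a contradiction from the Weil bounds $|\alpha_i|=q^{23/2}$ via the inequality $q^{11}(1-\sqrt q)^2>0$, checked against the Faber--Van der Geer trace computations for small $q$. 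None of this appears in your proposal.

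Your second route --- Harder's complete Eisenstein computation plus the Faber--Van der Geer formula for $\e(\A_2,\V_\lambda)$, reading off $H^i_!$ via purity --- is exactly the conditional derivation the paper sketches just before stating the conjecture, and you correctly label it as resting on the open point-count conjecture. So the overall shape of your proposal is sound as a commentary on why the statement is a conjecture, but if offered as a proof it has a genuine gap in the first route and does not reproduce the unconditional partial result (Theorem~\ref{conj20}) that the paper actually needs and proves.
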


This conjecture should be much weaker than the full conjectural formula: one expects the deeper arithmetic phenomena to occur in the middle dimension. If $\lambda$ is regular, then Conjecture \ref{conj} is well known in any genus: the map $H^\bullet_c(\A_g,\V_\lambda \otimes \mathbf C) \to H^\bullet(\A_g,\V_\lambda \otimes \mathbf C)$ always factors through the $L^2$-cohomology, and by a result of Faltings \cite{faltings83} the $L^2$-cohomology vanishes outside the middle degree for regular weights. A more high-powered proof uses the theorem that $H^i(\A_g,\V_\lambda)$ vanishes below the middle degree for regular weights, a consequence of Saper's theory of $\mathcal L$-modules \cite{lmodules}. 

We do not know how to prove Conjecture \ref{conj}. Perhaps this is known to the experts. However, we prove later in this section by slightly ad hoc methods that the conjecture holds for all local systems $\V_\lambda$ satisfying $|\lambda| \leq 20$, which is enough for us.

\subsection{The Gysin sequence}The Torelli map $\M_2 \to \A_2$ is an open immersion (which is special to genus two, in general it is a ramified double cover of its image). Its complement is the closed substack of products of elliptic curves; we write this substack as $\A_{1,1}$. We denote by $\V_{l,m}$ also the restrictions of the above local systems to $\M_2$ and to $\A_{1,1}$. We get a Gysin long exact sequence 
\[ H^{\bullet-2} (\A_{1,1},\V_\lambda)(-1) \to H^\bullet (\A_2,\V_\lambda) \to H^\bullet (\M_2,\V_\lambda) \to H^{\bullet-1}(\A_{1,1},\V_\lambda)(-1).\]
Many of these cohomology groups vanish. Both $\M_2$ and $\A_{1,1}$ are affine, which implies that their cohomology can be nonzero only in degree at most $3$ (resp. at most $2$). By Raghunathan's vanishing theorem \cite{raghunathan1}, the group $H^1(\A_2,\V_\lambda)$ vanishes. The aforementioned result of Saper gives vanishing below the middle degree for regular weights. 

Recall that we are really interested in the lowest weight part of the cohomology. We see from the Gysin sequence that $W_{k+|\lambda|} H^k(\A_2,\V_\lambda)$ surjects onto $W_{k+|\lambda|} H^k(\M_2,\V_\lambda)$. Moreover, as we only care about even weights, we can restrict ourselves to even $k$, hence to $k=2$ (on $\M_2$) and $k=2$ or $k=4$ (on $\A_2$). 

By Harder's results, we know the lowest weight part of Eisenstein cohomology on $\A_2$ in any degree:
\begin{thm}[Harder]\label{harder1}
Suppose $\lambda \neq (0,0)$. Then 
\[ W_{i+|\lambda|}H^i_\Eis(\A_2,\V_\lambda \otimes \Q_\ell) = \begin{cases}
\bigoplus_{\substack{f \in \Sigma_{4+4a} \\ L(f,2+a) \neq 0}} \Q_\ell(-1-2a) & \text{if $\lambda = (2a,2a)$, $i = 2$} \\
0 & \text{else.}
\end{cases}\]\end{thm}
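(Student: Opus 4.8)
The plan is to read the statement off from Harder's complete determination, in \cite{harder}, of the Eisenstein cohomology $H^i_\Eis(\A_2,\V_\lambda)$ for every $i$ and every $\lambda$: what is asserted here is only the bottom graded piece $\gr^W_{i+|\lambda|}H^i_\Eis(\A_2,\V_\lambda)$, so the work consists of locating that one summand of his answer and transporting it to the $\ell$-adic language used here.

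First I would recall the mechanism behind the computation. Eisenstein cohomology is ``cohomology at infinity'': it is controlled by the cohomology of the boundary of the Borel--Serre compactification of $\A_2$, which decomposes over the three conjugacy classes of proper parabolic subgroups of $\Sp_4$ --- the Borel, the Klingen parabolic with Levi $\mathrm{GL}_1\times\SL_2$, and the Siegel parabolic with Levi $\mathrm{GL}_2$. Each face contributes the cohomology of its Levi factor, suitably induced and twisted; for the coefficient system $\V_{l,m}$ the Siegel face produces classes built out of elliptic cusp forms of weight $l+m+4$, the Klingen face produces modular and Tate contributions governed by $l-m$ and $m$, and the Borel face produces only Tate classes built from Dirichlet characters. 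The subtle --- and genuinely deep --- part of the analysis, which goes well beyond the Euler-characteristic computations of Bergstr\"om--Faber--van der Geer, is to decide \emph{in which cohomological degree} a given boundary class survives, equivalently which boundary classes lie in the image of the restriction $H^\bullet(\A_2,\V_\lambda)\to H^\bullet(\partial\A_2,\V_\lambda)$; this is dictated by poles, residues and special values of the Rankin--Selberg and standard $L$-functions attached to the cusp forms on the Levi factors, and in the case that concerns us the decisive value is $L(f,2+a)$.

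Next I would impose the simplifications that render the answer so clean. Downstream we only want even weights, hence only $\gr^W_{i+|\lambda|}$ with $i$ even; and by the Gysin sequence together with Raghunathan's vanishing and the affineness of $\M_2$ and $\A_{1,1}$ (noted above) only $i=2$ and $i=4$ can contribute at all. Feeding this into Harder's answer, one finds that the minimal weight $i+|\lambda|$ is attained only by a single family of degree-$2$ classes coming from the Siegel parabolic: the Borel and Klingen contributions, everything in cohomological degree $\geq 3$, and the degree-$4$ groups all land in strictly higher weight or vanish, and a weight-and-degree count forces the coefficient system to be $\V_{2a,2a}$ --- this is where the shape of $\lambda$ is pinned down. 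For each $f\in\Sigma_{4+4a}$ the Siegel-parabolic mechanism yields a degree-$2$ boundary class $\Q_\ell(-1-2a)$, whose weight $4a+2$ equals $i+|\lambda|$ for $i=2$ and $|\lambda|=4a$, so the bookkeeping is consistent; and Harder's criterion is that this class contributes to $H^2_\Eis$ --- rather than being redistributed to a higher degree --- precisely when $L(f,2+a)\neq 0$. Summing over such $f$ gives the displayed formula; the hypothesis $\lambda\neq(0,0)$ serves only to exclude the trivial local system, whose even cohomology carries additional algebraic Tate classes.

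The main obstacle is bookkeeping rather than a new idea. Two points require care: \emph{(i)} passing from Harder's automorphic, archimedean computation to the $\ell$-adic Galois statement used here --- harmless, because every Eisenstein class in sight is visibly a Tate twist of a Dirichlet-character motive or of the Galois representation of an elliptic cusp form, so the comparison isomorphisms carry the answer over verbatim; and \emph{(ii)} tracking the weight filtration through the Borel--Serre boundary spectral sequence carefully enough to be sure that no other parabolic and no other cohomological degree slips a class into the minimal weight $i+|\lambda|$. The arithmetic heart --- that it is precisely the non-vanishing of $L(f,2+a)$ that decides whether the degree-$2$ Tate class is present --- is the content of \cite{harder}, and here it would simply be quoted.
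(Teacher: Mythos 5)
The paper gives no proof beyond attributing the result to \cite{harder}, and your proposal likewise reduces to locating the relevant graded piece in Harder's complete determination of $H^\bullet_\Eis(\A_2,\V_\lambda)$ and transporting it to $\ell$-adic coefficients via the comparison isomorphism; this is essentially the paper's approach. The additional exposition you give about the Borel--Serre boundary, the three parabolics, the weight bookkeeping ($\Q_\ell(-1-2a)$ has weight $4a+2=i+|\lambda|$ for $i=2$, $|\lambda|=4a$), and the role of $L(f,2+a)$ is accurate and matches the paper's own remark that the surviving classes are residual Eisenstein classes from the Siegel parabolic.
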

Here $\Sigma_{k}$ denotes the set of normalized cusp eigenforms for $\SL_2(\Z)$ of weight $k$, and we include only those eigenforms whose $L$-function does not vanish at the point $2+a$. (It is not known whether there exists an eigenform whose $L$-function does vanish there. For weights up to $200$ there are none; tables can be found in William Stein's Modular Forms Database \cite{mfd}.) These are residual Eisenstein classes \cite{residual} coming from the Siegel parabolic subgroup. 

It is explained in \cite{d-elliptic} how to compute the cohomology groups $H^i(\A_{1,1},\V_{l,m})$: after one has determined the branching formula from $\Sp_4$ to $(\Sp_2)^2 \rtimes \sym_2$, the cohomology groups are expressed in terms of cohomology of local systems on $\A_1$. One finds the following result:

\begin{prop}\label{pca11}If $W_{i+|\lambda|}H^i(\A_{1,1},\V_\lambda) \neq 0$ and $i$ is even, then either $i=0$ and $\lambda = (2a,2a)$, or $i=2$ and $|\lambda| \geq 20$. In the former case we have $H^0(\A_{1,1},\V_{2a,2a}) = \Q(-2a)$. In the latter case we are considering a tensor product of two classes  in $H^1(\A_1,\V_l)$, $l \geq 10$, associated to cusp forms for $\SL_2(\Z)$. \end{prop}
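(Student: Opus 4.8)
The plan is to compute $H^\bullet(\A_{1,1},\V_\lambda)$ directly from the description of $\A_{1,1}$ as a quotient of $\A_1 \times \A_1$ by the involution $\sym_2$ that swaps the two factors, following the strategy of \cite{d-elliptic}. First I would restrict the local system $\V_\lambda$ on $\A_2$ to $\A_{1,1}$ and decompose it according to the branching rule from $\Sp_4$ to $(\Sp_2 \times \Sp_2) \rtimes \sym_2$. Concretely, on the double cover $\A_1 \times \A_1$ the pullback of $\V_{\lambda_1,\lambda_2}$ decomposes as a direct sum of external tensor products $\V_a \boxtimes \V_b$ with $\sym_2$ acting by permuting summands (and by $(-1)^{\text{something}}$ on the diagonal summands, which will matter for parity), and taking $\sym_2$-invariants gives $H^\bullet(\A_{1,1},\V_\lambda)$ as the invariant part of $\bigoplus_{a,b} H^\bullet(\A_1,\V_a)\otimes H^\bullet(\A_1,\V_b)$ via the Künneth theorem. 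The indices $(a,b)$ that appear all satisfy $a+b \le |\lambda|$ and $a \equiv b \pmod 2$, with $a = b = 0$ occurring (with multiplicity one, in the appropriate invariant piece) exactly when $\lambda = (2a,2a)$ — this is where the hypothesis $\lambda=(2a,2a)$ in the $i=0$ case comes from, since $H^0(\A_1,\V_c)$ is nonzero only for $c=0$, giving $H^0(\A_{1,1},\V_{2a,2a})=\Q(-2a)$ after accounting for the Tate twist coming from the Schur functor normalization.

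Next I would invoke the Eichler--Shimura theory on $\A_1 = \M_{1,1}$: $H^0(\A_1,\V_c) = \Q$ if $c=0$ and $0$ otherwise, $H^1(\A_1,\V_c)$ is nonzero only for $c \ge 10$ even, where its lowest weight part consists of the cusp-form classes (of weight $c+1$), and there is no cohomology in degree $\ge 2$ since $\A_1$ is affine of dimension $1$. Now I run through the Künneth decomposition of $H^i(\A_{1,1},\V_\lambda)$ for $i$ even. The degree-$0$ part of a Künneth summand forces both factors to be $H^0$, which forces $a=b=0$ and $\lambda=(2a,2a)$. The degree-$2$ part can only come from $H^1 \otimes H^1$ (the $H^0\otimes H^2$ and $H^2\otimes H^0$ terms vanish by affineness), so both elliptic local systems must satisfy $a, b \ge 10$; hence $|\lambda| \ge a+b \ge 20$, and the class is a tensor product of two cusp-form classes as asserted. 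The remaining even degree is $i=4$, but that would need $H^2 \otimes H^2$, which is zero; so nothing survives there. It remains only to check that the weight bookkeeping is consistent, i.e.\ that the lowest weight part $W_{i+|\lambda|}H^i(\A_{1,1},\V_\lambda)$ sees precisely these contributions — this follows because the cusp form classes in $H^1(\A_1,\V_c)$ sit in the lowest weight $c+1$ and the trivial class in $H^0(\A_1,\V_0)$ has weight $0$, so the external products land in the lowest possible weight on $\A_{1,1}$.

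The main obstacle is purely combinatorial: pinning down the exact branching formula from $\Sp_4$ to $(\Sp_2)^2 \rtimes \sym_2$ — in particular, which pairs $(a,b)$ occur, with what multiplicities, which Tate twists accompany each summand, and how the generator of $\sym_2$ acts (including signs on diagonal summands $\V_a \boxtimes \V_a$), since the final answer is the $\sym_2$-invariant subspace and a sign error there would spoil the count. This is precisely the computation carried out in \cite{d-elliptic}, so I would quote it rather than redo it; given that branching formula, the rest is a mechanical application of Künneth together with the (entirely standard) cohomology of local systems on the affine curve $\A_1$, and the stated vanishing and nonvanishing claims drop out immediately.
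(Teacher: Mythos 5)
Your approach matches the paper's: the paper likewise defers the branching formula for $\Sp_4 \downarrow (\Sp_2)^2 \rtimes \sym_2$ to \cite{d-elliptic} and then reads off the cohomology from the K\"unneth theorem together with Eichler--Shimura on $\A_1$. One small imprecision: $H^1(\A_1,\V_c)$ is in fact nonzero already for $c\geq 2$ even (there is an Eisenstein contribution); what is true, and what your argument actually needs, is that the \emph{lowest weight} part $W_{c+1}H^1(\A_1,\V_c)$ is nonzero only for $c\geq 10$, where it consists of cusp form classes -- the weight bookkeeping you invoke at the end is what makes the argument go through, so the conclusion is unaffected.
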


We note in particular one consequence of what we have stated so far: although there are nonzero classes in $W_{2+|\lambda|}H^2(\A_{1,1},\V_{\lambda})$ for $|\lambda| \geq 20$, they all (conjecturally) vanish under the Gysin map 
\[ H^2(\A_{1,1},\V_\lambda)(-1) \to H^4(\A_{2},\V_\lambda)\]
since 
 Theorem \ref{harder1} and Conjecture \ref{conj} imply that there are no classes of the correct weight in $H^4(\A_2,\V_\lambda)$. In Section \ref{pc} we shall see how this implies that the Gysin map $H^{k}(\MM_{1,11}\times \MM_{1,11})(-1) \to H^{k+2}(\MM_{2,20})$ vanishes on $H^{11}(\MM_{1,11})\otimes H^{11}(\MM_{1,11})$. 

We also deduce the following result.

\begin{prop}\label{onm2}The lowest weight piece $W_{22} H^2 (\M_2,\V_{10,10})$ does not vanish. \end{prop}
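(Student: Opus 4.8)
The plan is to extract the nonvanishing of $W_{22}H^2(\M_2,\V_{10,10})$ from the Gysin sequence together with Harder's computation of Eisenstein cohomology and our knowledge of $H^\bullet(\A_{1,1},\V_{10,10})$. First I would write down the relevant piece of the Gysin long exact sequence for $|\lambda|=20$, $\lambda=(10,10)$, in degree two and in the lowest weight $22 = 2 + |\lambda|$. Applying $\gr^W_{22}$ (which is exact on the sequence, since the weight filtration is strict), the surjection $W_{22}H^2(\A_2,\V_{10,10}) \twoheadrightarrow W_{22}H^2(\M_2,\V_{10,10})$ noted in the text reduces the problem to showing that $W_{22}H^2(\A_2,\V_{10,10}) \neq 0$, provided one also checks that the incoming Gysin term $H^0(\A_{1,1},\V_{10,10})(-1)$ cannot kill everything. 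By Proposition \ref{pca11} this term is $\Q(-2a)(-1) = \Q(-11)$, which has weight $22$, so one must argue that its image does not exhaust $W_{22}H^2(\A_2,\V_{10,10})$; this is where the bulk of the argument sits.

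The key input is the short exact sequence $0 \to H^2_!(\A_2,\V_{10,10}) \to H^2(\A_2,\V_{10,10}) \to H^2_\Eis(\A_2,\V_{10,10}) \to 0$ together with Theorem \ref{harder1}. Since $\lambda = (2a,2a)$ with $a = 5$ and $i = 2$, Harder's theorem gives
\[ W_{22}H^2_\Eis(\A_2,\V_{10,10}) = \bigoplus_{\substack{f \in \Sigma_{24} \\ L(f,7) \neq 0}} \Q_\ell(-11). \]
The space $\Sigma_{24}$ of cusp eigenforms of weight $24$ for $\SL_2(\Z)$ is two-dimensional, and (as recorded in the discussion after Theorem \ref{harder1}, via Stein's tables) the central-ish value $L(f,7)$ is nonzero for both eigenforms. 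Hence $W_{22}H^2_\Eis(\A_2,\V_{10,10})$ is two-dimensional, so in particular $W_{22}H^2(\A_2,\V_{10,10})$ has dimension at least two. On the other hand the incoming Gysin image is a quotient of the one-dimensional space $H^0(\A_{1,1},\V_{10,10})(-1)$, hence has dimension at most one. Therefore $\gr^W_{22}$ of the map $H^2(\A_2,\V_{10,10}) \to H^2(\M_2,\V_{10,10})$ has image of dimension at least $2 - 1 = 1 > 0$, which gives the claim.

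The main obstacle is the arithmetic nonvanishing statement $L(f,7) \neq 0$ for the eigenforms of weight $24$: it is not a formal consequence of anything, and in principle one would need to invoke the explicit tables. A cleaner route, which I would prefer to write, avoids this entirely by working with the compactly supported side: one has the dual surjection onto $H^2_{c}$, and combining the two one can locate the Eisenstein classes in $H^2$ regardless of the $L$-value by instead looking at whether they lie in the image of $\rho$ — but since here the classes are genuinely Eisenstein (coming from the Siegel parabolic, i.e.\ residual), they survive to $H^2$ precisely because they are not in the image of $H^2_c$. In either formulation, once one knows $\dim W_{22}H^2_\Eis(\A_2,\V_{10,10}) \geq 1$ (which only needs \emph{one} eigenform with $L(f,7) \neq 0$, a much safer assertion), the dimension count against the one-dimensional incoming Gysin term closes the argument. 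I would therefore present the proof as: apply $\gr^W_{22}$ to the Gysin sequence; bound the incoming term by Proposition \ref{pca11}; bound $W_{22}H^2(\A_2,\V_{10,10})$ below using Theorem \ref{harder1}; conclude by counting dimensions.
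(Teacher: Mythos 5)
Your main argument is essentially the paper's proof: apply $\gr^W_{22}$ to the Gysin sequence, note the incoming term $H^0(\A_{1,1},\V_{10,10})(-1)\cong\Q(-11)$ is one-dimensional by Proposition \ref{pca11}, bound $\gr^W_{22}H^2(\A_2,\V_{10,10})$ below by two via the Eisenstein quotient and Theorem \ref{harder1}, and conclude by exactness (the paper also invokes Theorem \ref{conj20} to pin down the dimension exactly, but your lower bound via strictness of the weight filtration suffices).

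One correction to your side remark: knowing only $\dim W_{22}H^2_\Eis(\A_2,\V_{10,10})\geq 1$ (one eigenform with $L(f,7)\neq 0$) does \emph{not} close the argument. If the middle term were one-dimensional and the incoming one-dimensional Gysin map hit it isomorphically, the map $\gr^W_{22}H^2(\A_2,\V_{10,10})\to \gr^W_{22}H^2(\M_2,\V_{10,10})$ would be zero and nothing would be proved. You genuinely need the two-dimensional lower bound, hence $L(f,7)\neq 0$ for \emph{both} weight-$24$ eigenforms; this is the nonvanishing input the paper takes from Stein's tables.
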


\begin{proof}Consider the exact sequence
\[ H^0(\A_{1,1},\V_{10,10})(-1) \to H^2(\A_2,\V_{10,10}) \to H^2(\M_2,\V_{10,10}),\]
and apply $\gr^W_{22}$. Then the term in the middle becomes 2-dimensional by Theorem \ref{harder1} (and the vanishing of inner cohomology for weights at most 20, Theorem \ref{conj20}), since there are two distinct cusp eigenforms for $\SL_2(\Z)$ of weight $24$. However, the term to the right is 1-dimensional by Proposition \ref{pca11}. The result follows. 
\end{proof}

It is natural to believe that $\lambda = {(10,10)}$ is the ``first'' example of a nontrivial weight for which $W_{2k+|\lambda|} H^{2k}(\M_2,\V_\lambda) \neq 0$ for some $k$. Indeed the Gysin sequence shows that $W_{2k+|\lambda|} H^{2k}(\A_2,\V_\lambda)$ surjects onto $W_{2k+|\lambda|} H^{2k}(\M_2,\V_\lambda)$. The former consists only of Eisenstein cohomology according to Conjecture \ref{conj}, so by Theorem \ref{harder1} we need only to consider local systems of the form $\lambda = (2a,2a)$. When $a < 5$, $W_{2+4a} H^2_\Eis(\A_2,\V_{2a,2a})$ is at most $1$-dimensional. It is natural to believe that the map $$H^0(\A_{1,1},\V_{2a,2a})(-1) \cong \Q(-1-2a) \to H^2(\A_2,\V_{2a,2a}) $$ has as large rank as possible, i.e.\ that the map is nonzero as soon as $H^2(\A_2,\V_{2a,2a})$ contains a summand $\Q(-1-2a)$. By Harder's calculations this holds precisely when $a \neq 1$. This follows a very heuristic principle that ``any differential which is not zero for obvious reasons (weight, Galois action, etc.) should have as large rank as possible''. This would imply that we get something nonzero on $\M_2$ for the first time when $\lambda = (10,10)$. This result would also fit very well with the work of Graber and Pandharipande. Thus one is led to believe the following.

\begin{speculation}The map $H^0(\A_{1,1},\V_{2a,2a})(-1) \to H^2(\A_2,\V_{2a,2a})$ is injective for all natural numbers $a \neq 1$. \end{speculation}

Another way to phrase this speculation is that the local system $\V_{2,2}$ is the only local system on $\M_2$ for which $H^1(\M_2,\V_\lambda)$ is nonzero. The fact that $H^1(\M_2,\V_{2,2}) \neq 0$ has been verified by very different means in recent work in preparation by the second author \cite{tommasipreparation}. This corrects an error in \cite[Proposition 19]{getzlergenustwo}.

\subsection{Vanishing of inner cohomology}
We now prove that the inner cohomology does in fact vanish outside the middle degree for local systems of weight up to $20$, which will be enough for the applications in this article.

\begin{thm}\label{conj20}Suppose $|\lambda| \leq 20$. Then $H^i_!(\A_2,\V_\lambda)$ vanishes for $i \neq 3$. \end{thm}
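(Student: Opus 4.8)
The plan is to reduce to the degree-$2$ case by Poincar\'e duality, and then to play off the open stratification $\M_2 \hookrightarrow \A_2 \hookleftarrow \A_{1,1}$ against the fact that $\M_2$ and $\A_{1,1}$ are affine. Since the symplectic pairing makes $\V_\lambda$ essentially self-dual ($\V_\lambda^\vee \cong \V_\lambda(|\lambda|)$) and $\A_2$ is a smooth Deligne--Mumford stack of dimension $3$, Poincar\'e duality identifies $H^i_!(\A_2,\V_\lambda)$ with the dual of $H^{6-i}_!(\A_2,\V_\lambda)$ up to a Tate twist; it therefore suffices to prove the vanishing for $i\in\{0,1,2\}$. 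We may also assume $|\lambda|$ even, since $\V_\lambda$ has no cohomology otherwise.

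For $i=0$ and $i=1$ I would argue as follows. The closed substack $\A_{1,1}$ has open complement $\M_2$, and since $\M_2$ (of dimension $3$) and $\A_{1,1}$ (of dimension $2$) are affine, we have $H^i_c(\M_2,\V_\lambda)=0$ for $i<3$ and $H^i_c(\A_{1,1},\V_\lambda)=0$ for $i<2$; the excision triangle relating $H^\bullet_c(\M_2,\V_\lambda)$, $H^\bullet_c(\A_2,\V_\lambda)$ and $H^\bullet_c(\A_{1,1},\V_\lambda)$ then forces $H^i_c(\A_2,\V_\lambda)=0$, hence $H^i_!(\A_2,\V_\lambda)=0$, for $i=0,1$. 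For $i=2$ the same triangle, together with $H^2_c(\M_2,\V_\lambda)=0$, yields an injection $H^2_c(\A_2,\V_\lambda)\hookrightarrow H^2_c(\A_{1,1},\V_\lambda)$. Now $H^2_!(\A_2,\V_\lambda)$ is pure of weight $2+|\lambda|$ and is a quotient of $H^2_c(\A_2,\V_\lambda)$, whose weights are $\le 2+|\lambda|$; so it is a quotient of $\gr^W_{2+|\lambda|}H^2_c(\A_2,\V_\lambda)$, hence a subquotient of
\[ \gr^W_{2+|\lambda|}H^2_c(\A_{1,1},\V_\lambda)\;\cong\;\bigl(W_{2+|\lambda|}H^2(\A_{1,1},\V_\lambda)\bigr)^\vee, \]
the isomorphism being Poincar\'e duality on the affine surface $\A_{1,1}$ (and $\gr^W$ is exact on the relevant mixed objects). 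By Proposition \ref{pca11} the right-hand side vanishes as soon as $|\lambda|<20$. This disposes of every $\lambda$ with $|\lambda|<20$ (alternatively one could invoke Saper's theorem for the regular weights).

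The remaining work is the case $|\lambda|=20$. Here $W_{22}H^2(\A_{1,1},\V_\lambda)$ can be nonzero: by Proposition \ref{pca11} it is spanned by classes of the form $\omega\otimes\omega$ with $\omega$ the class of $\Delta$ in $H^1(\A_1,\V_{10})$, so the previous paragraph only shows that $H^2_!(\A_2,\V_\lambda)$ is a subquotient of an explicit (low-dimensional, Tate) space. To see that it is in fact zero, one must check that this weight-$22$ part of $H^2_c(\A_{1,1},\V_\lambda)$ does not survive to $H^2_c(\A_2,\V_\lambda)$, i.e.\ that it maps injectively into $H^3_c(\M_2,\V_\lambda)$ under the connecting map of excision (equivalently, that the corresponding class of $H^2(\A_{1,1},\V_\lambda)(-1)$ lies in the image of the Gysin map from $H^3(\M_2,\V_\lambda)$). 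For this I would use the point-count computation of $e_c(\A_2,\V_\lambda)$ of Bergstr\"om--Faber--van der Geer (see \cite{bfg11}): the only summand of their formula that is not of Tate or elliptic-cusp-form type, namely the Siegel cusp form contribution $S[l-m,m+3]$, is pure of weight $23$ and so does not affect $\gr^W_{22}e_c(\A_2,\V_\lambda)$, which is therefore unconditionally known. Combining this with Harder's complete determination \cite{harder} of the Eisenstein cohomology (needed now in weights $23$ and $24$, not just in the lowest weight recorded in Theorem \ref{harder1}) and with the identity $\gr^W_{2+|\lambda|}H^2_c(\A_2,\V_\lambda)=H^2_!(\A_2,\V_\lambda)$ --- valid because $H^2_{c,\Eis}$ is Poincar\'e dual to $H^4_\Eis$, whose lowest-weight part vanishes by Theorem \ref{harder1} --- one solves for $[H^2_!(\A_2,\V_\lambda)]$ in $K_0(\Gal)$ and checks that it is zero. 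Equivalently, one may simply feed the cohomology of the local systems on $\M_2$ in the relevant low weights, which is rigorously known for these finitely many $\lambda$, into the Gysin sequence directly; for instance when $\lambda=(l,0)$ one has $H^0(\A_{1,1},\V_{l,0})=0$ and the analysis reduces entirely to $\M_2$.

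I expect the case $|\lambda|=20$ to be the only real obstacle: everything with $|\lambda|<20$ is formal, but for $|\lambda|=20$ one has genuinely nonzero lowest-weight classes coming from $\A_{1,1}$ that must be shown not to assemble into inner classes on $\A_2$, and ruling this out requires a finite but concrete point-count and Eisenstein-cohomology calculation together with careful control of the connecting and Gysin maps. A minor formal point along the way is the legitimacy of applying $\gr^W$ to the excision sequence and to the virtual Euler characteristic; the former is fine since $\gr^W$ is exact, and the latter since the point-count formula is a sum of terms of definite weight.
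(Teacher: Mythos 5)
Your reduction for $|\lambda|<20$ is correct and is essentially the Poincar\'e-dual of what the paper does: the paper works with ordinary cohomology and the Gysin sequence, exploits affineness of $\M_2$ to conclude $H^4(\A_2,\V_\lambda)$ is a quotient of $H^2(\A_{1,1},\V_\lambda)(-1)$, and then kills $H^4_!$ in lowest weight; you flip this to compactly supported cohomology and argue for $H^2_!$ instead. That part is sound.

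The gap is in the case $|\lambda|=20$, which is where the real work lies, and your sketch does not close it. Two concrete issues. First, your aside that ``when $\lambda=(l,0)$ one has $H^0(\A_{1,1},\V_{l,0})=0$ and the analysis reduces entirely to $\M_2$'' is not correct: the relevant group is $H^2(\A_{1,1},\V_{20,0})$, not $H^0$, and the paper computes $W_{22}H^2(\A_{1,1},\V_{20,0})=\wedge^2 S[12]\cong\Q(-11)\neq 0$, exactly as for $\V_{10,10}$. To rule out a surviving inner class for $\V_{20,0}$, the paper uses a genuinely different tool: the Faltings--Chai BGG description of the Hodge filtration shows the $F$-weights occurring in $H^\bullet(\A_2,\V_{a,b}\otimes\mathbf C)$ lie in $\{a+b+3,a+2,b+1,0\}$, so no Tate class of the required type can appear in $H^2_!(\A_2,\V_{20,0})$; this observation is absent from your proposal. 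Second, for $\V_{10,10}$ your plan ``solve for $[H^2_!]$ in $K_0(\Gal)$ by combining the Bergstr\"om--Faber--van der Geer formula for $e_c$ with Harder's Eisenstein cohomology'' is not rigorous, because the FvdG expression for $e_c(\A_2,\V_\lambda)$ as a sum of motives is itself conjectural; claiming its weight-$22$ graded piece is ``unconditionally known'' because the Siegel-modular term has weight $23$ implicitly assumes the conjectural decomposition. The paper avoids this by using only unconditional inputs: the integer-valued Euler characteristic $\chi_!(\A_2,\V_{10,10})=0$ (from Getzler's computation on $\M_2$, the $\A_{1,1}$ computation, and Harder's Eisenstein cohomology), Poincar\'e duality to pin down what $H^2_!$, $H^3_!$, $H^4_!$ would have to be if $H^2_!\neq 0$, the Weil bound $|\alpha_i|=q^{23/2}$ on $H^3_!$, and the resulting strict positivity of $\mathrm{Tr}(\Frob_q\mid H^\bullet_!)$, contradicted by the actual Frobenius traces from point counts. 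Your proposal would need to be reorganised along these lines to be a proof.
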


\begin{proof}We know that for a nontrivial local system we have cohomology only in degrees $2$,$3$ and $4$, so by Poincar\'e duality of the inner cohomology it suffices to prove that $H^4_!(\A_2,\V_\lambda)$ vanishes. 

First consider the exact sequence
\[  H^2(\A_{1,1},\V_\lambda)(-1) \to H^4 (\A_2,\V_\lambda) \to 0 \]
coming from the Gysin sequence, and the fact that $\M_2$ is affine. This shows that $H^4_!(\A_2,\V_\lambda)$ is a quotient of $W_{2+|\lambda|} H^2(\A_{1,1},\V_\lambda)(-1)$. The latter space vanishes when $|\lambda| < 20$ by Proposition \ref{pca11}, so $H^i_!(\A_2,\V_\lambda)$ vanishes automatically outside the middle degree for these weights. 

For $|\lambda|=20$ it suffices to consider $\V_{20,0}$ and $\V_{10,10}$, since as explained earlier we know vanishing for regular weights by \cite{faltings83}. For both of these local systems one checks that 
\[ W_{2+|\lambda|} H^2 (\A_{1,1},\V_\lambda) = \wedge^2 S[12],\]
where $S[12]$ is the motive attached to cusp forms of weight $12$ for $\SL_2(\Z)$ by Deligne--Scholl. The motive $\wedge^2 S[12]$ is just the Tate motive of weight $22$; its $\ell$-adic/Hodge--theoretic realizations are $\Q_\ell(-11)$ and $\Q(-11)$, respectively. 

For the local system $\V_{20,0}$ one can apply the description of the Hodge filtration on $H^\bullet(\A_g,\V_\lambda \otimes \mathbf C)$ via the BGG-complex due to Faltings and Chai \cite[VI.5.5.]{faltingschai}. Their construction implies that the cohomology groups of $\V_{a,b}$ have a Hodge filtration where all $F$-weights lie in the set $\{a+b+3,a+2,b+1,0\}$. In particular, there can be no classes which are pure of Tate type in the cohomology of a local system of the form $\V_{a,0}$. This shows that $H^2_!(\A_2,\V_{20,0})$ must in fact vanish.


Thus it remains to consider the local system $\V_{10,10}$. The conjectures of Faber and Van der Geer imply that the inner cohomology of $\V_{10,10}$ will in fact vanish in every degree, including the middle. Now one also knows the integer-valued Euler characteristic of the inner cohomology, i.e. the number
\[ \chi_!(\A_2,\V_\lambda) = \sum_{i} (-1)^i \dim H^i_!(\A_{2},\V_\lambda)\]
for any local system $\lambda$: the Euler characteristic on $\M_2$ of all local systems $\V_\lambda$ were determined by Getzler \cite{getzler02}, on $\A_{1,1}$ one uses \cite{d-elliptic}, and we can subtract off the Euler characteristic of the Eisenstein cohomology using Harder's results. So we know in fact unconditionally that $\chi_!(\A_2,\V_{10,10}) = 0$. 

If it really were the case that $H^2_!(\A_2,\V_{10,10})$ did not vanish, then the ranks of $H^i_!(\A_2,\V_{10,10})$ would need to be $1,2,1$ in degrees $2,3$ and $4$, respectively (to match up with Poincar\'e duality and the Euler characteristic). By Poincar\'e duality we would also know that $H^2_!$ and $H^4_!$ are given by $\Q_\ell(-11)$ and $\Q_\ell(-12)$ in the $\ell$-adic realization.  Hence if $\alpha_1$ and $\alpha_2$ denote the eigenvalues of $\Frob_q$ on $H^3_!(\A_2,\V_{10,10} \otimes \Q_\ell)$, then 
\[ \mathrm{Tr}\left(\Frob_q \mid H^\bullet_!(\A_2,\V_{10,10} \otimes \Q_\ell) \right)= q^{11} - \alpha_1 - \alpha_2 + q^{12}.\]We claim that this number can not vanish for any $q$. Indeed we have $|\alpha_i| = q^{23/2}$ for $i=1,2$, so the triangle inequality implies that 
\begin{align*}  
|q^{11} - \alpha_1 - \alpha_2 + q^{12}| & \geq |q^{11}+q^{12}| - |\alpha_1| - |\alpha_2| \\
& = q^{11}+q^{12} - 2q^{23/2} \\
& = q^{11}(1-2\sqrt q + q) \\
&= q^{11}(1-\sqrt q)^2  > 0. \end{align*}
But the conjectures of Faber and Van der Geer predict that the inner cohomology of $\V_{10,10}$ should vanish, in particular they have computed that the trace of Frobenius on the inner cohomology vanishes for all primes up to $37$. This contradiction proves that the inner cohomology could not have been nonzero in degree $4$ for the local system $\V_{10,10}$, either. 
\end{proof}

\begin{rem}A more careful version of the argument given above using the results of Faltings--Chai will in fact prove the result for all local systems of the form $\V_{a,0}$; the difficult case seems to be the local systems of the form $\V_{a,a}$. \end{rem}

\section{Monodromy invariants in the fibers of $\M_{2,n}\to \M_2$}
\label{monodromy}
Let $X$ be a smooth projective curve of genus $g \geq 2$. We shall consider its $n$-fold product $X^n$, and the \emph{Fulton--MacPherson compactification} \cite{fmcompactification} $\FM(X,n)$ of the configuration space of points on $X$. The \emph{tautological ring} of $X^n$  is defined as the subalgebra of its rational Chow ring generated by the classes of the $\binom n 2$ diagonals $\Delta_{ij}$, and the classes $K_i$ which are given by the canonical divisor on the $i$th factor. The tautological ring of $\FM(X,n)$ is defined analogously, except one moreover takes as generators the boundary strata of the compactification. As for the tautological rings of moduli spaces of curves, one can also consider the image of the tautological ring in cohomology. In this case, each class $K_i$ is proportional to the class of a point $a_i$, and we can instead take the classes of points as generators. We denote these rings by $RH^\bullet (X^n)$ and $RH^\bullet (\FM(X,n))$, respectively. The tautological ring of a single curve was introduced by Faber and Pandharipande, see \cite[\S 2]{faberjapan}.

\begin{prop}Let $X$ be a smooth projective curve of genus $g$. The subalgebra $H^\bullet(X^n)^{\Sp_{2g}}$ of monodromy invariant classes coincides with the tautological cohomology ring $RH^\bullet(X^n)$.  \end{prop}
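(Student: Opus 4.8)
The plan is to identify both sides with a concrete, combinatorially described algebra. On the tautological side, $RH^\bullet(X^n)$ is by definition generated by the diagonal classes $\Delta_{ij}$ and the point classes $a_i$, together with the fundamental class; one reduces to these since $K_i = (2g-2)a_i$ in cohomology. On the monodromy-invariant side, the K\"unneth decomposition writes $H^\bullet(X^n) = (H^0 \oplus H^1 \oplus H^2)^{\otimes n}$, and $\Sp_{2g}$ acts only on the $H^1 = \V$ factors. So $H^\bullet(X^n)^{\Sp_{2g}}$ is spanned by tensors in which the factors lying in $\V$ are paired off by $\Sp_{2g}$-invariants of $\V^{\otimes 2k}$. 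The first main step is therefore to invoke the first fundamental theorem of invariant theory for the symplectic group: every invariant of $\V^{\otimes 2k}$ is a linear combination of products of the symplectic form $\omega \in (\V^{\otimes 2})^{\Sp_{2g}} \subset H^1(X)\otimes H^1(X)$ applied to pairs of tensor factors. Concretely, $(\V^{\otimes 2k})^{\Sp_{2g}}$ is spanned by the images of $\omega^{\otimes k}$ under the various perfect matchings of $\{1,\dots,2k\}$.

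The second step is to match these generators geometrically. The class of the diagonal $\Delta_{ij} \in H^2(X^n)$ decomposes under K\"unneth as $a_i\otimes 1 + 1\otimes a_j + (\text{class of }\omega\text{ placed in factors }i,j)$, up to the standard sign and normalisation; this is the classical formula for the class of the diagonal in $X\times X$ in terms of a symplectic basis of $H^1$. Hence the ``off-diagonal K\"unneth component'' $\delta_{ij}$ of $\Delta_{ij}$, i.e.\ $\Delta_{ij} - a_i - a_j$, is exactly $\omega$ inserted into slots $i$ and $j$. Multiplying such classes $\delta_{i_1 j_1}\cdots \delta_{i_k j_k}$ for a matching $\{i_1,j_1\},\dots,\{i_k,j_k\}$ and multiplying by point classes $a_l$ on the remaining slots produces precisely the spanning set of $H^\bullet(X^n)^{\Sp_{2g}}$ described above. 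This shows $H^\bullet(X^n)^{\Sp_{2g}} \subseteq RH^\bullet(X^n)$. For the reverse inclusion one notes that each generator $\Delta_{ij}$, $a_i$ of the tautological ring is itself monodromy invariant (the diagonal and a point are defined without reference to a choice of basis of $H^1$), so the subalgebra they generate lands inside the invariants; this direction is essentially immediate.

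The step I expect to require the most care is the first fundamental theorem input, and specifically bookkeeping the interaction between the $H^0, H^1, H^2$ factors. One must be careful that an invariant tensor can put some factors in $H^0$ or $H^2$ (contributing $1$ or $a_i$) and the rest in $\V$, and that the symplectic invariant theory only constrains the $\V$-factors; so the full invariant subring is the span over all subsets $S\subseteq\{1,\dots,n\}$ of even size of $\big(\bigotimes_{i\notin S} (H^0\oplus H^2)_i\big)\otimes \big(\V^{\otimes S}\big)^{\Sp_{2g}}$. Organising this cleanly — and checking that no extra relations among the matched $\delta_{ij}$ sneak in beyond what invariant theory already predicts, so that the two spanning sets literally coincide rather than one merely surjecting onto the other — is the crux. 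Everything else (the diagonal class formula, $K_i \propto a_i$, closure under products) is standard and I would treat it briefly.
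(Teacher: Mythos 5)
Your argument is essentially the paper's: both reduce via K\"unneth to $\Sp_{2g}$-invariants of $H^1(X)^{\otimes l}$, invoke symplectic invariant theory (you cite the first fundamental theorem, the paper cites Weyl's construction in Fulton--Harris \S 17.3 --- the same content), and hinge on identifying the symplectic pairing inserted in slots $i,j$ with $\Delta_{ij} - a_i - a_j$. The worry you flag at the end about the two spanning sets ``literally coinciding'' is a non-issue, since what is claimed is equality of two subspaces of $H^\bullet(X^n)$, for which mutual inclusion of spanning sets suffices.
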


\begin{proof}By the K\"unneth theorem, $H^\bullet(X^n)$ is a sum of terms of the form $$H^2(X)^{\otimes k} \otimes H^1(X)^{\otimes l}.$$ Clearly, $H^2(X)^{\otimes k}$ is $\Sp_{2g}$-invariant and spanned by a tautological class. Now consider $H^1(X)^{\otimes l}$. The decomposition of this space into irreducible representations of $\Sp_{2g}$ can be understood from Weyl's construction of the irreducible representations of $\Sp_{2g}$, \cite[\S 17.3]{fh91}. In particular we see from loc.\ cit.\ that a $\Sp_{2g}$-invariant class necessarily can be obtained from a tensor in $H^1(X)^{\otimes (l-2)}$ by inserting the class of the symplectic pairing at two indices $i$ and $j$. But this is the same as multiplying by $\Delta_{ij} - a_i - a_j$, so inductively we see that a $\Sp_{2g}$-invariant class is tautological. The reverse inclusion is clear. \ \end{proof}

\begin{prop}The subalgebra $H^\bullet(\FM(X,n))^{\Sp_{2g}}$ of monodromy invariant classes coincides with the tautological cohomology ring $RH^\bullet (\FM(X,n))$.  \end{prop}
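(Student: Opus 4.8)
The plan is to reduce the statement about $\FM(X,n)$ to the already-proven statement about $X^n$, by exploiting the well-understood structure of the Fulton--MacPherson compactification as an iterated blow-up. Recall that $\FM(X,n)$ is obtained from $X^n$ by a sequence of blow-ups along (proper transforms of) the diagonals, and in particular there is a projection $p\colon \FM(X,n) \to X^n$ which is birational, proper, and $\Sp_{2g}$-equivariant (the monodromy action on $H^1(X)$ is functorial and compatible with $p$). By a theorem of Fulton--MacPherson, the cohomology of $\FM(X,n)$ is a free module over $H^\bullet(X^n)$ with an explicit basis given by monomials in the boundary classes, and the projection formula together with $p_\ast p^\ast = \mathrm{id}$ shows that $p^\ast$ is injective with image a canonical summand. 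Crucially, this whole decomposition is $\Sp_{2g}$-equivariant, and the boundary classes of $\FM(X,n)$ are themselves monodromy invariant (they are pushed forward from strata which are products of $\FM$'s of $X$ with the small diagonals, carrying trivial monodromy in the ``collision'' directions).

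The key steps, in order, are as follows. First I would recall the presentation of $H^\bullet(\FM(X,n))$ as an algebra over $H^\bullet(X^n)$: it is generated over $H^\bullet(X^n)$ by the boundary divisor classes $D_S$ (one for each subset $S \subseteq \{1,\dots,n\}$ with $|S| \geq 2$), subject to the Fulton--MacPherson relations, and as a graded vector space it decomposes as $\bigoplus_{m} H^\bullet(X^{n-|m|+\ell(m)}) \cdot (\text{monomial in } D_S)$ summed over admissible collections of subsets. Second, I would observe that each such summand is $\Sp_{2g}$-equivariantly isomorphic (via pushforward from the corresponding boundary stratum, which is itself a product of an $X$-power with several copies of $\FM(X,k)$'s) to a tensor product of cohomology of powers of $X$ with trivial-monodromy factors. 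Third, taking $\Sp_{2g}$-invariants commutes with the direct sum, so $H^\bullet(\FM(X,n))^{\Sp_{2g}} = \bigoplus_m H^\bullet(X^{n-|m|+\ell(m)})^{\Sp_{2g}} \cdot (\text{monomial})$. Fourth, by the previous proposition each $H^\bullet(X^{k})^{\Sp_{2g}}$ equals the tautological ring $RH^\bullet(X^k)$, which is exactly the span of diagonal and point classes; pushing these forward along the stratum inclusions gives tautological classes on $\FM(X,n)$, and multiplying by the boundary monomials stays tautological by definition of $RH^\bullet(\FM(X,n))$. Hence every invariant class is tautological. The reverse inclusion is immediate: the generators $\Delta_{ij}$, $a_i$, and the boundary strata are all monodromy invariant, since the $\Sp_{2g}$-action on $H^1(X)$ is the only source of nontrivial monodromy and these classes involve no ``free'' $H^1$ factors.

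The main obstacle I expect is making precise the equivariance of the Fulton--MacPherson decomposition $H^\bullet(\FM(X,n)) \cong \bigoplus H^\bullet(X^{k_m}) \otimes (\text{boundary part})$ — that is, checking carefully that each basis element in the Fulton--MacPherson presentation can be written as a pushforward from a boundary stratum whose cohomology, as an $\Sp_{2g}$-representation, is a tensor product of one symplectic factor (the ``surviving'' copies of $X$) with factors on which the monodromy is trivial. This is essentially bookkeeping with the combinatorics of nested subsets and the fibers of the stratum projections (which are iterated projectivized normal bundles, topologically products of projective spaces, hence with trivial monodromy), but one must be attentive because the small diagonals contribute copies of $X$ with the \emph{same} monodromy twist, so the surviving symplectic representation is genuinely a power of $H^\bullet(X)$ of the correct (smaller) exponent, to which the preceding proposition applies verbatim. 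Once this equivariant splitting is in hand, the rest is formal.

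\begin{proof}
There is a proper birational $\Sp_{2g}$-equivariant morphism $p \colon \FM(X,n) \to X^n$. By the results of Fulton and MacPherson \cite{fmcompactification}, the cohomology ring $H^\bullet(\FM(X,n))$ is generated as an algebra over $p^\ast H^\bullet(X^n)$ by the classes of the boundary divisors $D_S$, for $S \subseteq \{1,\ldots,n\}$ with $|S|\geq 2$, and as a graded $H^\bullet(X^n)$-module it is free with an explicit basis consisting of suitable monomials in the $D_S$. Each basis element, together with the $H^\bullet(X^n)$-factor it multiplies, is the pushforward along the inclusion of a boundary stratum; such a stratum is isomorphic to a product of one factor of the form $X^{m}$ (with $m \leq n$, recording the ``surviving'' points) and several factors of the form $\FM(X,k)$ attached to the collided points, and the maps from the stratum to $X^m$ have fibers that are iterated projective bundles, hence topologically products of projective spaces with trivial monodromy. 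Consequently the monodromy action of $\Sp_{2g}$ on each graded piece factors through the action on $H^\bullet(X^m)$ alone.

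Since taking $\Sp_{2g}$-invariants is exact and commutes with direct sums, we obtain
\[ H^\bullet(\FM(X,n))^{\Sp_{2g}} = \bigoplus_b H^\bullet(X^{m(b)})^{\Sp_{2g}} \cdot b, \]
where $b$ ranges over the Fulton--MacPherson monomial basis and $m(b)$ is the corresponding number of surviving points. By the preceding proposition, $H^\bullet(X^{m(b)})^{\Sp_{2g}} = RH^\bullet(X^{m(b)})$, which is spanned by the diagonal classes and point classes. Pushing these forward along the stratum inclusion produces tautological classes on $\FM(X,n)$ by definition (the strata are among the generators of the tautological ring, and pushforward of a product of diagonal and point classes along a stratum inclusion lies in the tautological ring); multiplying by the boundary monomial $b$ keeps us inside $RH^\bullet(\FM(X,n))$. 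Hence every monodromy invariant class is tautological.

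For the reverse inclusion, the generators of $RH^\bullet(\FM(X,n))$ are the diagonals $\Delta_{ij}$, the point classes $a_i$, and the boundary strata. The diagonal and point classes are pulled back from $X^n$, where they are monodromy invariant by the previous proposition, and the boundary strata involve no free copies of $H^1(X)$, so they too are $\Sp_{2g}$-invariant. Therefore $RH^\bullet(\FM(X,n)) \subseteq H^\bullet(\FM(X,n))^{\Sp_{2g}}$, completing the proof.
\end{proof}
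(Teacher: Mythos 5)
Your proof is correct and takes essentially the same approach as the paper: both rely on the Fulton--MacPherson presentation of $H^\bullet(\FM(X,n))$ as an algebra over $H^\bullet(X^n)$ generated by the $\Sp_{2g}$-invariant boundary classes $D_S$, from which the invariant subalgebra is spanned by (invariants of $H^\bullet(X^n)$) times monomials in the $D_S$, and the previous proposition finishes. The paper stays entirely with the $H^\bullet(X^n)$-module structure and never needs your geometric detour identifying each monomial summand with $H^\bullet(X^{m(b)})$ via stratum pushforwards — that detour is correct but unnecessary, and the paper's version is cleaner for it.
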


\begin{proof}The projection $\FM(X,n) \to X^n$ makes $H^\bullet (\FM(X,n))$ an algebra over $H^\bullet (X^n)$, generated by classes $D_S$ associated to strata of $\FM(X,n)$ defined by certain points coinciding, see \cite[Corollary 7a]{fmcompactification}. The natural $\Sp_{2g}$-action on $H^\bullet (\FM(X,n))$ is compatible with this algebra structure, and can be defined by declaring that each generator $D_S$ is $\Sp_{2g}$-invariant. Thus an $\Sp_{2g}$-invariant class in $H^\bullet (\FM(X,n))$ can be written as the product of an $\Sp_{2g}$-invariant class in $H^\bullet (X^n)$ and a product of classes $D_S$; in particular, any $\Sp_{2g}$-invariant class is tautological by the previous proposition. \end{proof}

\begin{thm}Let $X$ be a curve of genus two. There is an isomorphism $RH^\bullet (\M_{2,n}^{rt}) \cong RH^\bullet (\FM(X,n))$. \end{thm}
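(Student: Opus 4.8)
The plan is to relate both sides to the space $\M_{2,n}^{\rt}$ and its open subspace, exploiting the fact that $\M_{2,n}^{\rt} \to \M_2$ is a smooth projective family whose fibers over a curve $X$ are precisely $\FM(X,n)$. First I would observe that the universal curve of genus two gives a family $\FM(\mathcal{C}/\M_2, n) \to \M_2$, and that this family \emph{is} $\M_{2,n}^{\rt}$: a stable $n$-pointed curve with rational tails of genus two is the same datum as a smooth genus two curve together with a point of the Fulton--MacPherson configuration space. So we have a smooth projective morphism $\pi\colon \M_{2,n}^{\rt} \to \M_2$ with fiber $\FM(X,n)$ over the moduli point of $X$. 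Because the base $\M_2$ has the rational cohomology of a point with its algebra structure concentrated in degree zero (indeed $H^\bullet(\M_2;\Q) = \Q$), and more importantly because the tautological classes on $\M_{2,n}^{\rt}$ are built from $\psi$-classes, $\kappa$-classes and boundary divisors all of which restrict fiberwise to the tautological generators of $\FM(X,n)$, restriction to a fiber should give an algebra map $RH^\bullet(\M_{2,n}^{\rt}) \to RH^\bullet(\FM(X,n))$.

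The key input is that the Leray spectral sequence for $\pi$ degenerates at $E_2$ (this is stated in Section~\ref{outline} as one of the reasons for working with rational tails), so that $H^\bullet(\M_{2,n}^{\rt})$ is built from $H^\bullet(\M_2, \R^q\pi_\ast\Q)$, and the local systems $\R^q\pi_\ast\Q$ decompose according to the $\Sp_4$-representations appearing in $H^\bullet(\FM(X,n))$ by the Leray--Hirsch-type computation of Fulton--MacPherson. Concretely, $H^\bullet(\FM(X,n))$ is, as an $\Sp_4$-representation, a sum of pieces $H^2(X)^{\otimes k}\otimes H^1(X)^{\otimes l}$ times products of the boundary classes $D_S$, exactly as analyzed in the two preceding propositions. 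The tautological cohomology ring $RH^\bullet(\M_{2,n}^{\rt})$ is, by the same reasoning used there, spanned by the monodromy-invariant part: a tautological class on $\M_{2,n}^{\rt}$ pulls back the generators $\Delta_{ij}$, $a_i$, $D_S$ which lie in the $\Sp_4$-invariant part of the fiber cohomology and in the part of the base cohomology coming from $H^0(\M_2)$. Hence one gets a commutative square relating the two monodromy-invariant subalgebras, and by the previous two propositions $H^\bullet(\FM(X,n))^{\Sp_4} = RH^\bullet(\FM(X,n))$.

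The step I expect to be the main obstacle is showing that restriction to a fiber is actually an \emph{isomorphism} of tautological rings, rather than just a surjection. Surjectivity is easy: every generator of $RH^\bullet(\FM(X,n))$ — the diagonals, the point classes, the strata $D_S$ — is manifestly the fiberwise restriction of a tautological class on $\M_{2,n}^{\rt}$. Injectivity requires knowing that there are no extra relations among tautological classes on $\M_{2,n}^{\rt}$ beyond those that already hold fiberwise, equivalently that $RH^\bullet(\M_{2,n}^{\rt})$ sits inside $H^0(\M_2, \mathcal{H})$ for the relevant local system $\mathcal{H}$ of invariants, with no contribution from $H^{>0}(\M_2, -)$. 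Here one uses that $H^0(\M_2,\Q) = \Q$ together with Raghunathan/Saper-type vanishing and the fact — implicit in the weight analysis of Section~\ref{localsystems} — that the only nontrivial local system on $\M_2$ carrying relevant cohomology is $\V_{2,2}$, and that the classes it contributes are in odd degree (weight) hence cannot interfere with the even-degree tautological ring. Packaging this: the edge map $RH^\bullet(\M_{2,n}^{\rt}) \to H^0(\M_2, \R^\bullet\pi_\ast\Q)^{\text{taut}} = RH^\bullet(\FM(X,n))$ is injective because any tautological class restricting to zero on a fiber would, by $\Sp_4$-equivariance and $E_2$-degeneration, have to live in positive-degree cohomology of $\M_2$ with coefficients in a nontrivial local system — and the only such contributions are the cusp-form classes, which are non-tautological and of the wrong parity. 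This yields the desired isomorphism $RH^\bullet(\M_{2,n}^{\rt}) \cong RH^\bullet(\FM(X,n))$.
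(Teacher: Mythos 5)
Your overall strategy — fibering over $\M_2$, degeneration of the Leray spectral sequence, identification of $H^0(\M_2,\R^q\pi_\ast\Q_\ell)$ with the $\Sp_4$-invariants of the fiber cohomology, and the observation that tautological generators on $\M_{2,n}^{\rt}$ restrict to the tautological generators of $\FM(X,n)$ — matches the paper's proof exactly. The surjectivity part is fine.

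However, your injectivity argument contains a genuine error. You claim that the tautological ring cannot receive contributions from the columns $H^{>0}(\M_2,-)$ because ``the only nontrivial local system on $\M_2$ carrying relevant cohomology is $\V_{2,2}$, and that the classes it contributes are in odd degree.'' This is false, and in fact contradicts the central computations of the paper: Section~\ref{localsystems} establishes that $W_{22}H^2(\M_2,\V_{10,10})\neq 0$ (Proposition~\ref{onm2}), and more generally leaves open $\V_{4,4},\V_{6,6},\V_{8,8},\V_{10,10}$. These classes sit in $H^2(\M_2,-)$ with \emph{even} weight $2+|\lambda|$ and of Tate type, so a parity argument cannot dispose of them. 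Indeed, precisely these classes in $F^1$ of the Leray filtration are what produce the non-tautological even-degree classes on $\M_{2,N}^{\rt}$ in Theorem~\ref{aa}; if your argument were correct one would wrongly conclude that $\M_{2,n}^{\rt}$ has no non-tautological even cohomology at all. The paper's injectivity step is different and does not rely on any such vanishing: it observes directly that any tautological class — being a product of boundary-stratum classes and $\psi$-classes — is monodromy-invariant, so the kernel of the restriction $H^\bullet(\M_{2,n}^{\rt})\to H^\bullet(\FM(X,n))$ (which, by degeneration and $H^\bullet(\M_2)=\Q$, is exactly the part built from nontrivial local systems in positive degree) contains no nonzero tautological class. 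You should replace your local-system-by-local-system analysis with this monodromy observation, or at least not invoke a vanishing of $H^{>0}(\M_2,\V_\lambda)$ that the paper itself disproves.
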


\begin{proof}Let $\pi \colon \M_{2,n}^{rt} \to \M_2$ be the projection, and let us consider the Leray spectral sequence for $\pi$, which degenerates because $\pi$ is smooth and proper. The fiber of $\pi$ over $[X]$ is exactly $\FM(X,n)$. The higher pushforwards of $\pi$ are local systems, defined by the action of $\Sp_4$ on the cohomology of $\FM(X,n)$. Since $\M_2$ has the rational cohomology of a point, we have an isomorphism
\[ H^0(\M_2,\R^\bullet \pi_\ast\Q_\ell) \cong H^\bullet(\FM(X,n))^{\Sp_4} \cong RH^\bullet (\FM(X,n)).\]
On the other hand, every tautological class on $\M_{2,n}^{rt}$ is monodromy invariant (being  the class of a stratum possibly multiplied with a $\psi$-class), so all classes in the kernel of $H^\bullet( \M_{2,n}^{rt}) \to H^\bullet (\FM(X,n))$ are nontautological. The result follows. \end{proof}

\begin{rem}In particular, we have given a conceptual proof that all these tautological rings are Gorenstein. Indeed both $H^\bullet (X^n)$ and $H^\bullet (\FM(X,n))$ satisfy Poincar\'e duality, and since the cup product is compatible with the $\Sp_{2g}$-action, we see that the subring of monodromy invariants does, too. When $g=2$ this result is not new, however: it was proven by Tavakol \cite{tavakol2} in Chow, which implies the result also in cohomology. The fact that $RH^\bullet (X^n)$ is Gorenstein was announced without proof in \cite[\S 2]{faberjapan}. The unpublished proof of this fact is rather different from ours and uses instead an approach similar to that used Tavakol in genus 1 and 2 in \cite{tavakol1, tavakol2}. In Chow, it is not true in general that the tautological ring of $X^n$ is Gorenstein: a counterexample is constructed for $g=4$ and $n=2$ in \cite{greengriffiths}. \end{rem}

Let us now determine the pure part of the even cohomology of $\M_{2,n}^{rt}$, i.e.\ the lowest weight pieces $W_{2i} H^{2i}(\M_{2,n}^{rt})$. We shall do this by means of the Leray spectral sequence for $\pi \colon \M_{2,n}^\rt \to \M_2$. Hence we must determine the sheaves $\R^q \pi_\ast \Q_\ell$, or what amounts to the same thing, the action of $\Sp_{4}$ on $H^\bullet (\FM(X,n))$. 

The decomposition of $H^\bullet (X^n)$ into irreducible representations is easily read off from Weyl's construction of the irreducible representations of the symplectic group, as mentioned earlier. Consider a highest weight $l \geq m \geq 0$ with $l+m = n$. The corresponding irreducible representation appears for the first time in the cohomology of $X^{n}$, where it occurs in the middle degree $H^n (X^n)$ (in the image of $H^1(X)^{\otimes n} \subset H^n (X^n)$) and nowhere else.  

Using Fulton and MacPherson's explicit description of the cohomology of $\FM(X,n)$ it is not hard to extend this analysis also to $\FM(X,n)$: under the map $H^n(X^n) \hookrightarrow H^n(\FM(X,n))$ we see that every irreducible representation of weight $(l,m)$ with $l+m = n$ occurs in the middle degree also in $\FM(X,n)$, and the fact that it does not in any other degrees follows from relation (2)(i) of \cite[Corollary 7a]{fmcompactification}. We have proven the following result:

\begin{prop}\label{proppen}Let $\pi \colon \M_{2,n}^\rt \to \M_2$, and let $l+m = n$. The local system $\V_{l,m}$ occurs as a summand in $\R^n \pi_\ast \Q_\ell$ but not in any other degrees. \end{prop}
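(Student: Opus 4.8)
The plan is to compute the higher direct image sheaves $\R^q\pi_\ast\Q_\ell$ outright. As in the proof of the preceding theorem, $\pi$ is smooth and proper with fibre $\FM(X,n)$ over $[X]\in\M_2$, so $\R^q\pi_\ast\Q_\ell$ is the local system on $\M_2$ with fibre $H^q(\FM(X,n))$ and monodromy given by the action of $\Sp_4$ on this cohomology group. Since $\Sp_4$ is reductive this action is semisimple, and the statement becomes the purely representation-theoretic assertion that, for $l+m=n$, the irreducible $\Sp_4$-representation $\V_{l,m}$ occurs in $H^q(\FM(X,n))$ precisely for $q=n$.

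First I would handle $X^n$ itself. By the K\"unneth formula $H^q(X^n)$ is a direct sum of terms of the form $H^1(X)^{\otimes b}\otimes H^2(X)^{\otimes c}$ (after reordering the factors), with $b+2c=q$ and $b+c\le n$; as an $\Sp_4$-module each such term is a sum of copies of $V^{\otimes b}$, where $V=H^1(X)$ is the standard representation. Weyl's construction of the irreducible representations of the symplectic group \cite[\S 17.3]{fh91} shows that $V^{\otimes b}$ contains an irreducible of highest weight $\mu$ only when $|\mu|\le b$ (the constraint $\ell(\mu)\le 2$ being automatic for $\Sp_4$). Applying this with $|\mu|=l+m=n$ gives $b\ge n$; combined with $b\le b+c\le n$ this forces $b=n$, $c=0$, and $q=n$. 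Conversely $\V_{l,m}$ does occur in $H^1(X)^{\otimes n}\subseteq H^n(X^n)$. Hence, inside $H^\bullet(X^n)$, the representation $\V_{l,m}$ appears exactly in the middle degree $n$.

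To transfer this to $\FM(X,n)$ I would use Fulton and MacPherson's description of the cohomology ring \cite[Corollary 7a]{fmcompactification}. The pullback $p^\ast\colon H^\bullet(X^n)\to H^\bullet(\FM(X,n))$ along the projection is a split injection, because $\FM(X,n)\to X^n$ is an iterated blow-up along smooth (monodromy-invariant) centres, and it is $\Sp_4$-equivariant; moreover $H^\bullet(\FM(X,n))$ is generated over $p^\ast H^\bullet(X^n)$ by the boundary classes $D_S$, each of which is $\Sp_4$-invariant. Injectivity already yields $\V_{l,m}\hookrightarrow H^n(X^n)\hookrightarrow H^n(\FM(X,n))$. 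For the vanishing in all other degrees I would argue as follows. Multiplying a class pulled back from $X^n$ by a monomial in the invariant classes $D_S$ can only raise the cohomological degree, and by semisimplicity every irreducible occurring in $H^q(\FM(X,n))$ already occurs in $H^{q'}(X^n)$ for some $q'\le q$; this rules out $q<n$. To rule out $q>n$ I would invoke relation (2)(i) of \cite[Corollary 7a]{fmcompactification}: it forces the $H^1(X)$-classes attached to two indices lying in a common boundary set $S$ to become equal after multiplication by $D_S$, so that, in the resulting monomial basis of $H^\bullet(\FM(X,n))$, a basis element of $\Sp_4$-weight $(l,m)$ with $l+m=n$ must carry $n$ mutually independent $H^1(X)$-factors and therefore no $H^2(X)$-factor and no $D_S$-factor; such an element lies in $p^\ast H^n(X^n)$ and sits in degree exactly $n$.

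The one genuinely non-formal step is this last point --- bounding, via the combinatorics of the Fulton--MacPherson presentation, the interaction of the boundary classes $D_S$ with the $\Sp_4$-weights, so that no two-row representation of maximal size $|\mu|=n$ can be created in cohomological degree above $n$. Once the additive structure of $H^\bullet(\FM(X,n))$ has been written out explicitly this is a bookkeeping matter; the rest of the argument reduces to the K\"unneth formula, Weyl's construction, and semisimplicity of $\Sp_4$-representations.
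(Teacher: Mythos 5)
Your proof is correct and follows essentially the same route as the paper's: analyze $H^\bullet(X^n)$ via the K\"unneth formula and Weyl's construction to locate the two-row representations of full weight $n$ in the middle degree only, then pass to $\FM(X,n)$ using the Fulton--MacPherson presentation, with relation (2)(i) of their Corollary 7a doing the work of excluding other degrees. You fill in a couple of details the paper leaves implicit (the separate, purely degree-theoretic argument for $q<n$ from the fact that the $D_S$ are positive-degree invariants, and the explanation of how (2)(i) collapses the $H^1(X)$-content of any monomial carrying a $D_S$-factor below full symplectic weight), but these are elaborations rather than a different proof.
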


\begin{rem}A slightly more careful reading of Weyl's construction will give also the multiplicity of $\V_{l,m}$ and its decomposition as an $\sym_n$-representation. For instance, the local system $\V_{k,k}$, where $n = 2k$, is very relevant for us: it occurs once, tensored with the irreducible representation of $\sym_{2k}$ corresponding to the partition $(2,2,\ldots,2)$. It is not hard to see that the non-tautological algebraic cycle on $\MM_{2,20}$ constructed in \cite{graberpandharipande} transforms according to the representation $(2,2,\ldots,2)$, so their results fit well with ours, and our belief that the local system $\V_{10,10}$ gives rise to the first nontautological class on $\MM_{2,n}$. \end{rem}

Now let $N$ be the smallest positive integer for which there is a local system $\V_{l,m}$, $l+m = N$, and an $i$ for which 
\[ W_{2+N} H^{2} (\M_2,\V_{l,m}) \neq 0.\]
As we shall see very soon, this $N$ is the same as the one defined in the introduction. By Propositions \ref{onm2} and \ref{conj20} and the fact that $W_{2k+|\lambda|} H^{2k}(\A_2,\V_\lambda)$ surjects onto $W_{2k+|\lambda|} H^{2k}(\M_2,\V_\lambda)$, we have $N \in \{8,12,16,20\}$ and $l=m=N/2$. Since the Leray spectral sequence for $\pi$ degenerates, we see as a consequence of Proposition \ref{proppen} that we find a nonvanishing cohomology class in $W_{2+N} H^{2+N} (\M_{2,N}^{rt})$ which is \emph{not} monodromy invariant, i.e.\ which vanishes under the restriction map $H^\bullet(\M_{2,N}^\rt) \to H^\bullet (\FM(X,N))$, and therefore is nontautological. Since the local system $\V_{l,m}$ only occurred in $\R^N \pi_\ast\Q_\ell$, there can be no nontautological classes of the correct  weight in other even degrees. We have therefore proven the following theorem stated in Section \ref{outline}:

\begin{thm}\label{aa}Let $N$ be the smallest natural number such that  $W_{2i} H^{2i} (\M_{2,N}^\rt)$ contains a non-tautological class for some $i$. Then $N \in \{8,12,16,20\}$ and $i=\frac N 2 + 1$, and there are no non-tautological classes of the correct weight in other even degrees. \end{thm}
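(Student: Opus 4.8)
The plan is to assemble Theorem~\ref{aa} essentially as a bookkeeping exercise built on top of the machinery developed earlier in the section, since nearly all the analytic content has been packaged into Propositions~\ref{onm2}, \ref{pca11}, \ref{conj20} and \ref{proppen}. First I would fix once and for all the reduction: because $\M_{2,n}^\rt \to \M_2$ is smooth and proper, the Leray spectral sequence degenerates, and since $\M_2$ has the rational cohomology of a point, we get
\[ W_{2i}H^{2i}(\M_{2,n}^\rt) \cong \bigoplus_{q} W_{2i} H^0(\M_2, \R^q\pi_\ast\Q_\ell) = \bigoplus_q W_{2i} H^{2i-q}(\M_2, \R^q\pi_\ast\Q_\ell), \]
and by Proposition~\ref{proppen} each $\R^q\pi_\ast\Q_\ell$ is a sum of local systems $\V_{l,m}$ with $l+m = q$, with the trivial summand contributing only the tautological (monodromy-invariant) part. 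So the non-tautological part of $W_{2i}H^{2i}(\M_{2,n}^\rt)$ is controlled by the groups $W_{2+|\lambda|}H^2(\M_2,\V_\lambda)$ with $|\lambda| = q \le n$ and $q \equiv 0 \pmod 2$ (odd-weight local systems contribute nothing to $\A_2$-cohomology, hence nothing here), together with the fact that $\M_2$ is affine so only $H^0, H^1, H^2, H^3$ of a local system can be nonzero — and $H^0$ of a nontrivial $\V_\lambda$ vanishes, $H^1$ vanishes by Raghunathan, leaving only $H^2$ (and $H^3$, which has the wrong parity to produce an even $2i$).

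Next I would run the search for the minimal $N$. By the surjection $W_{2k+|\lambda|}H^{2k}(\A_2,\V_\lambda) \twoheadrightarrow W_{2k+|\lambda|}H^{2k}(\M_2,\V_\lambda)$ coming from the Gysin sequence and the affineness of $\M_2$, any non-tautological class in even degree on $\M_{2,n}^\rt$ forces $W_{2+|\lambda|}H^2(\A_2,\V_\lambda) \neq 0$ for some even-weight $\lambda$ with $|\lambda| \le n$. Theorem~\ref{conj20} (vanishing of inner cohomology outside the middle degree for $|\lambda| \le 20$) together with Theorem~\ref{harder1} says that for $|\lambda| \le 20$ this lowest-weight $H^2$ is purely Eisenstein and nonzero only for $\lambda = (2a,2a)$ with $4+4a \ge 12$ and a suitable nonvanishing $L$-value — i.e.\ $a \ge 2$, so $|\lambda| \in \{8, 12, 16, 20\}$. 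Conversely, Proposition~\ref{onm2} shows that for $\lambda = (10,10)$ the group $W_{22}H^2(\M_2,\V_{10,10})$ is genuinely nonzero. This pins down $N \in \{8,12,16,20\}$ and $\lambda = (N/2, N/2)$.

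Then I would translate back to $\M_{2,N}^\rt$: by Proposition~\ref{proppen} the local system $\V_{N/2,N/2}$ appears in $\R^N\pi_\ast\Q_\ell$ and in no other degree, so the non-tautological class lives in $W_{2+N}H^{2+N}(\M_{2,N}^\rt)$, with $2i = 2+N$, i.e.\ $i = N/2 + 1$ as claimed; and since $\V_{N/2,N/2}$ occurs only in that one $\R^q$, and no other $\V_{l,m}$ with $l+m \le N$ contributes anything (by minimality of $N$), there are no non-tautological classes of the correct weight in any other even degree. That a nonzero class in $W_{2+N}H^{2+N}(\M_2, \V_{N/2,N/2})$ actually is non-tautological is exactly the observation that it vanishes under restriction to a fiber $\FM(X,N)$ (it is not monodromy-invariant), combined with the theorem that tautological classes on $\M_{2,n}^\rt$ are monodromy-invariant and hence the kernel of restriction is non-tautological. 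The main obstacle, honestly, is not in this final assembly but upstream — everything here is clean once Theorem~\ref{conj20} and Proposition~\ref{onm2} are in hand; within the proof of Theorem~\ref{aa} itself the only point requiring care is making sure the weight and parity bookkeeping correctly excludes contributions from $H^3(\M_2,\V_\lambda)$ and from odd-weight local systems, and that the surjectivity onto $W_\bullet H^\bullet(\M_2,-)$ is applied in the right direction so that minimality of $N$ is not accidentally weakened.
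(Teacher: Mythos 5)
Your proposal is correct and follows essentially the same route as the paper's own argument: degenerate Leray spectral sequence for $\M_{2,n}^\rt \to \M_2$, reduce everything to the groups $W_{2+|\lambda|}H^2(\M_2,\V_\lambda)$ via the surjection from $\A_2$ together with Theorem~\ref{conj20} and Harder's formula to force $\lambda = (2a,2a)$ with $2 \leq a \leq 5$, and then apply Proposition~\ref{proppen} to place the class in degree $N+2$ and exclude other even degrees by minimality. Two harmless expositional slips worth fixing: the middle term of your first display, $\bigoplus_q W_{2i}H^0(\M_2,\R^q\pi_\ast\Q_\ell)$, is not isomorphic to $W_{2i}H^{2i}(\M_{2,n}^\rt)$ (the correct decomposition is $\bigoplus_{p+q=2i}W_{2i}H^p(\M_2,\R^q\pi_\ast\Q_\ell)$, and the whole point is that the non-tautological content sits in the $p>0$ terms), and $\R^q\pi_\ast\Q_\ell$ decomposes into summands $\V_{l,m}$ with $l+m \leq q$ and $l+m \equiv q \pmod 2$ rather than $l+m=q$, though your downstream reasoning correctly uses only $|\lambda| \leq n$ so the final argument is unaffected.
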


\renewcommand{\Q}{\mathbf Q_\ell}
\section{Vanishing of PC classes}
\label{pc}
All that remains at this point is to prove Theorem \ref{bb} announced in Section \ref{outline}. So let $n=20$, and let $k$ be even. We should prove that the image of $q_!$ in the exact sequence $(\partial)$ consists only of tautological classes. As in Corollary \ref{cor1}, all classes in $\widetilde \M_{2,20}^\rt$ are tautological except those  cohomology classes on $\MM_{1,11}\times \MM_{1,11}$ which are given by tensor products of two classes in $H^{11}(\MM_{1,11})$ associated to cusp forms. We call these \emph{PC classes} for brevity (``\emph{P}roducts of \emph{C}usp forms''). 

Our goal will be to prove that all PC classes are mapped to zero under $q_!$ in the exact sequence ($\partial$). It will at this point become more convenient to switch to compactly supported cohomology: dually, it is equivalent to prove that they are not in the image of the map $H^{22}_c(\MM_{2,20}) \to H^{22}_c(\MM_{1,11}\times \MM_{1,11})$. This, in turn, amounts to the same as proving that they are not in the image of the restriction map $H^{22}_c(\MM_{2,20}) \to H^{22}_c(\partial \M_{2,20})$, since the PC classes on $\MM_{1,11}\times \MM_{1,11}$ are pulled back from $\partial \M_{2,20}$. To see this last fact, use that the PC classes are supported on the interior, i.e.\ in the image of $H^{11}_c(\M_{1,11}) \to H^{11}_c(\MM_{1,11})$. The same applies then to $H^{22}_c(\M_{1,11}\times \M_{1,11}) \to H^{22}_c(\partial \M_{2,20})$.

Let $W \subset \MM_{2,20}$ be the union of the open stratum $\M_{2,20}$ and all strata of the form $\M_{1,11}\times \M_{1,11}$. There is a cartesian diagram
\[ \begin{diagram}
\partial \M_{2,20} & \lTo & \coprod \M_{1,11}\times \M_{1,11} \\
\dTo & & \dTo \\
\MM_{2,20} & \lTo & W
\end{diagram}\]
whose horizontal (resp.\ vertical) rows are open (resp.\ closed) immersions. By the functoriality of the exact sequence of a pair in compactly supported cohomology, we get a commutative diagram with exact rows
\begin{diagram}
H^k_c  (\partial W) & \lTo & H^k_c(\partial \M_{2,20}) & \lTo & \bigoplus H^k_c(\M_{1,11}\times \M_{1,11}) \\
\uEquals && \uTo_{\alpha^\ast} & & \uTo_{\beta^\ast} \\
H^k_c  (\partial W) & \lTo & H^k_c(\MM_{2,20}) & \lTo & H^k_c(W),
\end{diagram}
where $\partial W = \MM_{2,20} \setminus W$. 
We wish to prove here that the PC classes are not in the image of $\alpha^\ast$. Using that the PC classes go to zero in $H^k_c(\partial W)$, a very easy diagram chase shows it suffices to show that they are not in the image of $\beta^\ast$. 

\begin{rem}The arguments in the previous paragraph can be seen as computing by hand in the spectral sequence associated to the three-step filtration $\M_{2,20} \subset W \subset \MM_{2,20}$. \end{rem}

Now let $\C$ be the universal curve over $\M_{2}^\ct \cong \A_2$, and $\C^n$ its $n$th fibered power. Let similarly $\D$ be the universal curve over $\M_2^{\ct} \setminus \M_2 \cong \A_{1,1}$. There is a cartesian diagram
\[\begin{diagram}
\coprod \M_{1,11}\times \M_{1,11} & \rTo & W \\
\dTo && \dTo \\
\D^{20} & \rTo & \C^{20} \\
\end{diagram}\]

where the horizontal (resp.\ vertical) arrows are closed (resp.\ open) immersions. Since classes of the correct weight are mapped injectively under open immersions, and the PC classes do have the correct weight, it will then suffice to show that their image in the cohomology of $\D^{20}$ is not in the image of the map $H^k_c(\C^{20}) \to H^{k}_c(\D^{20})$. 

Finally, we consider the diagram 
\[\begin{diagram}
\D^{20} & \rTo & \C^{20} \\
\dTo & & \dTo_\pi \\
\A_{1,1} &\rTo^j& \A_2.
\end{diagram}\]

Here we have done the identifications $\A_{1,1} \cong \Sym^2 \M_{1,1}$ and $\A_2 \cong \M_2^\ct$. If we consider the Leray spectral sequence for $f \colon \M_{1,11} \to \M_{1,1}$, then the cusp form classes appear in $H^1_c(\M_{1,1},\R^{10}f_!\Q)$. Hence if we instead consider $g \colon \D^{20} \to \A_{1,1}$, then the PC classes appear in $H^2_c(\A_{1,1},\R^{20}g_! \Q)$. By functoriality of the Leray spectral sequence, we finally see that it will be enough to prove that the map 
\[ \gr^W_{22} H^2_c(\A_2,\R^{20} \pi_! \Q) \to  \gr^W_{22} H^2_c(\A_{1,1},j^\ast \R^{20} \pi_! \Q) \]
vanishes. (By the proper base change theorem we may identify $j^\ast \R^\bullet \pi_!\Q$ with the derived pushforward of $\D^{20} \to \A_{1,1}$.)

\begin{lem} If $\rho \colon \C \to \A_2$ is the projection of the universal curve, then 
\begin{align*}
\R^0 \rho_! \Q &= \Q, \\
\R^1 \rho_! \Q &= \V, \\
\R^2 \rho_! \Q &= \Q (-1) \oplus j_\ast \U(-1).
\end{align*}
Here $\U$ denotes the local system on $\A_{1,1}$ corresponding to the sign representation; see \cite{d-elliptic} for more explanation of the notation. 
\end{lem}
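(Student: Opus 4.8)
The plan is to compute the higher direct images $\R^q \rho_! \Q$ for the universal curve $\rho \colon \C \to \A_2$ stratum by stratum, using the decomposition of $\A_2$ into the open piece $\M_2$ and the closed piece $\A_{1,1}$. The cases $q=0$ and $q=1$ are immediate: $\R^0 \rho_! \Q = \Q$ because the fibers are connected and proper, and $\R^1 \rho_! \Q = \R^1 \rho_\ast \Q = \V$ by the very definition of the local system $\V$ in the paper (for $q=0,1$ there is no difference between $\R\rho_!$ and $\R\rho_\ast$ since $\rho$ is proper --- indeed all the fibers are proper curves, so $\R\rho_! = \R\rho_\ast$ throughout, and the subscript $!$ is really just bookkeeping). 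The only interesting case is $q=2$.

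For $q=2$, I would argue as follows. Over the open locus $\M_2$ the universal curve is a smooth proper family of genus-two curves, so $\R^2 \rho_\ast \Q|_{\M_2} = \Q(-1)$, the Tate twist coming from the fundamental class of the fiber. Over $\A_{1,1}$ the fibers of $\C$ are not smooth: a point of $\A_{1,1}$ corresponds to an unordered pair $\{E_1, E_2\}$ of elliptic curves, and the fiber of $\M_2^\ct \to \A_2$ there is the nodal curve $E_1 \cup E_2$ glued at a point (the "banana" degeneration, or rather two elliptic curves meeting at one point). Its $H^2$ is two-dimensional, spanned by the fundamental classes of the two components; as a representation of the local monodromy --- which includes the $\sym_2$ swapping $E_1$ and $E_2$ --- this two-dimensional space decomposes as the trivial representation plus the sign representation, i.e.\ $\Q(-1) \oplus \U(-1)$ on $\A_{1,1}$. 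So the restriction of $\R^2\rho_!\Q$ to $\A_{1,1}$ is $\Q(-1)\oplus\U(-1)$.

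To glue these two descriptions into the claimed formula $\R^2 \rho_! \Q = \Q(-1) \oplus j_\ast \U(-1)$, I would use the localization triangle for the stratification, i.e.\ the exact sequence relating $\R\rho_!$ over $\A_2$, over $\M_2$ (via $j'_!$, the open inclusion), and over $\A_{1,1}$ (via $i_\ast$, the closed inclusion). The constant summand $\Q$ (giving the Tate class $\Q(-1)$ fiberwise) extends across the boundary as a global summand $\Q(-1)$ of $\R^2\rho_!\Q$, because the fundamental class of the whole fiber is monodromy invariant and glues; this splits off. What remains is a sheaf supported set-theoretically on $\A_{1,1}$ --- here one must check it has no contribution over the open part, which follows since $H^2$ of a smooth genus-two curve is exactly one-dimensional --- and which restricts to $\U(-1)$ on $\A_{1,1}$; since $j$ is a closed immersion this remaining sheaf is just $j_\ast \U(-1)$. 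Combining, $\R^2\rho_!\Q = \Q(-1) \oplus j_\ast\U(-1)$.

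The main obstacle I anticipate is justifying the \emph{splitting} rather than merely an extension: a priori the localization triangle only gives a two-step filtration on $\R^2\rho_!\Q$ with graded pieces $j'_!(\Q(-1)|_{\M_2})$-type and $i_\ast(\Q(-1)\oplus\U(-1))$, and one has to argue that (a) the Tate part genuinely splits off as a direct summand over all of $\A_2$ and (b) the residual sheaf is $j_\ast\U(-1)$ and not some nonsplit extension involving the Tate class. Point (a) is handled by exhibiting the global section (the relative fundamental class / the pullback of the hyperplane class from a projective embedding of $\C$) splitting the inclusion $\Q(-1) \hookrightarrow \R^2\rho_!\Q$; point (b) requires knowing that there is no nontrivial extension of $j_\ast\U(-1)$ by $\Q(-1)$ realized here, which one can see either from a weight/monodromy-invariance argument (the sign-isotypic part cannot mix with the trivial-isotypic part under the $\sym_2$-action that is present in the geometry, as $\C \to \A_2$ genuinely factors the boundary through $\Sym^2\M_{1,1}$) or simply by a local computation of the vanishing cycle sheaf of the nodal degeneration, where the sign local system $\U$ arises precisely as the local system of "the two branches at the node", canonically a summand.
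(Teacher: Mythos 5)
Your approach is correct but takes a genuinely different route from the paper's. The paper's proof invokes the decomposition theorem of Beilinson--Bernstein--Deligne: since $\rho$ is proper, $\R\rho_\ast\Q$ splits as a direct sum of (shifted) intermediate extensions of local systems on strata; over $\M_2$ the pushforward is $\Q \oplus \V[-1] \oplus \Q(-1)[-2]$, and since $\Q$, $\V$, $\Q(-1)$ already extend to local systems on all of $\A_2$ they equal their own intermediate extensions, while a stalkwise dimension count over $\A_{1,1}$ shows the residual summand is $j_\ast\U(-1)$ (intermediate extension through a closed immersion being just $j_\ast$). You instead work with the open--closed localization triangle and exhibit the splitting $\Q(-1)\hookrightarrow \R^2\rho_!\Q$ by hand, using the image of $c_1$ of a relatively ample line bundle (e.g.\ $\omega_{\C/\A_2}$) under the Leray edge map $H^2(\C,\Q(1)) \to H^0(\A_2, \R^2\rho_\ast\Q(1))$; composing with the trace map gives multiplication by the relative degree, so this indeed splits, and the complement is a sheaf killed on $\M_2$, hence of the form $j_\ast(\cdot)$, identified on $\A_{1,1}$ as $\U(-1)$. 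The tradeoff is that BBD delivers the direct-sum structure for free at the cost of heavy machinery, whereas your argument is self-contained but requires you to produce the splitting explicitly and argue that the complement is supported on the boundary (which you do via the one-dimensionality of $H^2$ of a smooth genus-two curve). Both are sound, and in fact the remark immediately following the lemma in the paper notes that one can avoid the decomposition theorem; your proposal is in exactly that spirit. One small slip of notation: where you write ``the fiber of $\M_2^\ct \to \A_2$'' you of course mean the fiber of $\C \to \A_2$, since $\M_2^\ct \to \A_2$ is the Torelli isomorphism itself.
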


\begin{proof}The easiest way to see this is perhaps via the decomposition theorem of \cite{bbd}. The total pushforward is a direct sum of of simple perverse sheaves, which are (up to a degree shift) intermediate extensions of local systems on those locally closed subvarieties on which $\rho$ is a topologically locally trivial fibration. Over $\M_2$, the total pushforward is $\Q \oplus \V[1] \oplus \Q(-1)[2]$. All these summands are restrictions of local systems on $\A_2$, which are thus the intermediate extensions. For the term $j_\ast \U(-1)$, note that $\R^2 \rho_!\Q$ restricted to $\A_{1,1}$ is 2-dimensional; it is the sum of an $\sym_2$-invariant local system (generated by the sum of the fundamental classes of the two components) and an anti-invariant part (generated by their difference). So on $\A_{1,1}$ we find also the summand $\U$, and its intermediate extension is just given by $j_\ast$ since $\A_{1,1}$ is closed.  \end{proof}

\begin{rem}One can also see these facts without use of the decomposition theorem; for example, the fact that $\R^1\rho_!\Q$ is a local system on all of $\M_2^\ct$ can be understood transcendentally from the fact that the monodromy around a component of $\M_2^\ct \setminus \M_2$ is given by a Dehn twist around a separating curve, which acts trivially on the cohomology of the surface \cite[6.5.2]{farbmargalit}. \end{rem}

After the preceding lemma, the sheaf $\R^{20}\pi_! \Q$ is determined by the K\"unneth theorem. When we compute $\R^{20}\pi_! \Q$, we find a sum with two kinds of terms: (i) those which include a factor $j_\ast \U(-1)$, and (ii) those which do not.

First consider case (i), i.e.\ a summand $\F$ of $\R^{20}\pi_! \Q$ which includes a factor $j_\ast \U(-1)$. In this case, the map 
\[ H^2_c (\A_2, \F) \to H^2_c(\A_{1,1},j^\ast \F)\]
is always an isomorphism. However, we claim that $\gr^W_{22} H^2_c(\A_{1,1}, j^\ast \F)$ always vanishes in this case. Indeed $j^\ast \F$ is a local system on $\A_{1,1}$ which is pure of weight $20$. But since it includes a factor $\U(-1)$, it is in fact a Tate twist of a local system of weight at most $18$, and the computation of the cohomology of local systems on $\A_{1,1}$ (Theorem \ref{pca11}) shows that there are no classes in $H^2_c$ of the correct weight for local systems of weight below $20$; in fact, the first classes of the correct weight in $H^2_c$ are exactly the tensor products of cusp forms. 

We move on to case (ii), in which case the summand $\F$ is a local system on $\A_2$. In this case, we have 
\[ \gr^W_{22} H^2_c(\A_2,\F) =  H^2_!(\A_2, \F) \oplus \gr^W_{22} H^2_{c, \Eis}(\A_2,\F).\]
We see from Harder's computations (Theorem \ref{harder1}) that the latter summand always vanishes. According to Theorem \ref{conj20}, the first summand does, too. All in all, we deduce the following result:

\begin{thm} \label{bb}Let $n=20$, and suppose $k$ is even. The image of the map $q_!$ in the sequence \emph{($\partial$)} consists only of tautological classes.   \end{thm}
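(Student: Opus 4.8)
The plan is to reduce the statement, through a sequence of geometric restriction maps, to a vanishing assertion about the cohomology of local systems on $\A_2$ that can be settled using Harder's computation of Eisenstein cohomology (Theorem \ref{harder1}) together with the vanishing of inner cohomology away from the middle degree (Theorem \ref{conj20}). First I would dualize: proving that the image of $q_!$ in $(\partial)$ is tautological is, by Corollary \ref{cor1}, the same as proving that the only potentially non-tautological classes in the domain — the PC classes in $H^{22}(\MM_{1,11}\times\MM_{1,11})$, i.e.\ tensor products of two weight-$12$ cusp-form classes — die under $q_!$. Passing to compactly supported cohomology, this becomes the statement that the PC classes are not in the image of $H^{22}_c(\MM_{2,20})\to H^{22}_c(\MM_{1,11}\times\MM_{1,11})$; since the PC classes are supported on the interiors (pulled back from $H^{11}_c(\M_{1,11})$), one may replace the target and source by the restriction map to the boundary $\partial\M_{2,20}$.

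Next I would set up the three-step filtration $\M_{2,20}\subset W\subset\MM_{2,20}$, where $W$ is the union of the open stratum with the strata of type $\M_{1,11}\times\M_{1,11}$, and use the cartesian square relating $(\partial\M_{2,20},\coprod\M_{1,11}\times\M_{1,11})$ to $(\MM_{2,20},W)$. Functoriality of the long exact sequence of a pair in compactly supported cohomology gives a commuting ladder, and since the PC classes vanish in $H^k_c(\partial W)$ a short diagram chase reduces the problem to showing they are not in the image of $H^k_c(W)\to\bigoplus H^k_c(\M_{1,11}\times\M_{1,11})$. Now I identify $W$ as an open subvariety of $\C^{20}$, the $20$th fibered power of the universal curve over $\A_2\cong\M_2^\ct$, with the locus $\coprod\M_{1,11}\times\M_{1,11}$ sitting inside $\D^{20}$ (the fibered power over $\A_{1,1}$); since classes of the correct weight inject under open immersions, it is enough to handle $H^k_c(\C^{20})\to H^k_c(\D^{20})$. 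Finally, applying the Leray spectral sequence for $\C^{20}\to\A_2$ and $\D^{20}\to\A_{1,1}$ and tracking where the PC classes live (namely in $H^2_c(\A_{1,1},\R^{20}g_!\Q)$, the analogue of $H^1_c(\M_{1,1},\R^{10}f_!\Q)$ for elliptic curves), I reduce to proving that
\[ \gr^W_{22} H^2_c(\A_2,\R^{20}\pi_!\Q) \to \gr^W_{22} H^2_c(\A_{1,1},j^\ast\R^{20}\pi_!\Q) \]
vanishes.

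To analyze $\R^{20}\pi_!\Q$ I would first compute the pushforward $\R^\bullet\rho_!\Q$ of a single universal curve: $\R^0=\Q$, $\R^1=\V$, and $\R^2=\Q(-1)\oplus j_\ast\U(-1)$, the last term being visible on the reducible locus $\A_{1,1}$ where the fiber has two components (the decomposition theorem, or a direct monodromy argument via the separating Dehn twist, gives this). The K\"unneth theorem then expresses $\R^{20}\pi_!\Q$ as a direct sum of two types of summands: (i) those containing a $j_\ast\U(-1)$ factor, and (ii) honest local systems on $\A_2$, which will be of the form $\V_\lambda(-r)$. For type (i), the restriction map along $j$ is an isomorphism, but $j^\ast\F$ is a Tate twist of a local system on $\A_{1,1}$ of weight at most $18$, so by the computation of the cohomology of local systems on $\A_{1,1}$ (Proposition \ref{pca11}) it has no $H^2_c$ in weight $22$ — the first such classes occur only at weight $20$ and are precisely PC classes. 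For type (ii), $\gr^W_{22}H^2_c(\A_2,\F)$ splits as $H^2_!(\A_2,\F)\oplus\gr^W_{22}H^2_{c,\Eis}(\A_2,\F)$; the inner part vanishes by Theorem \ref{conj20} (the weights involved being at most $20$), and the compactly supported Eisenstein part in this degree and weight vanishes by Harder's Theorem \ref{harder1}. In all cases the source $\gr^W_{22}H^2_c(\A_2,\R^{20}\pi_!\Q)$ is zero, hence so is the map, which is what we needed.

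The main obstacle, and the one requiring genuine input beyond formal manipulation, is the vanishing of the inner cohomology $H^2_!(\A_2,\V_\lambda)$ for the relevant weight-$20$ local systems — this is Conjecture \ref{conj} in the range $|\lambda|\le 20$, established in Theorem \ref{conj20} by a somewhat delicate case analysis (using the Faltings--Chai description of the Hodge filtration for $\V_{a,0}$, and for $\V_{10,10}$ a combination of the Faber--Van der Geer point counts, the known Euler characteristic, and a triangle-inequality argument on Frobenius eigenvalues ruling out a hypothetical $1,2,1$ pattern). Everything else — the dualization, the diagram chases through the filtration, the reduction to fibered powers of the universal curve, and the K\"unneth bookkeeping — is routine once the setup is in place; the content is entirely concentrated in knowing enough about the cohomology of local systems on $\A_2$ and $\A_{1,1}$.
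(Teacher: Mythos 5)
Your proposal reproduces the paper's proof essentially step by step: the dualization to compactly supported cohomology, the three-step filtration through $W$, the reduction to the fibered powers $\C^{20}\to\A_2$ and $\D^{20}\to\A_{1,1}$, the computation of $\R^\bullet\rho_!\mathbf{Q}_\ell$, the K\"unneth split into summands with and without a $j_\ast\U(-1)$ factor, and the final vanishing via Proposition \ref{pca11}, Theorem \ref{harder1}, and Theorem \ref{conj20}. This is the same argument as in the paper, correctly carried out.
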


\bibliographystyle{alpha}
\bibliography{../database}

\end{document}